\documentclass[11pt]{article}

\oddsidemargin 0in    
\evensidemargin 0in
\topmargin -0.5in
\textheight 8.5 true in       
\textwidth 6.5 true in        
\date{}
\usepackage[utf8]{inputenc} 
\usepackage[T1]{fontenc}    
\usepackage[colorlinks=true,linkcolor=blue,allcolors=blue]{hyperref}       
\usepackage{url}            
\usepackage{booktabs}       
\usepackage{amsfonts}       
\usepackage{nicefrac}       
\usepackage{microtype,nicefrac}      
\usepackage{xspace}
\usepackage{natbib}
\usepackage[toc]{appendix}
\usepackage{minitoc}
\usepackage{subcaption}
\usepackage{float}
\usepackage{comment}
\usepackage{amsmath,amsthm}
\usepackage{graphicx}
\usepackage{rotating}
\usepackage{multirow}

\usepackage{amssymb}
\usepackage{autobreak}
\usepackage{mathtools}
\usepackage[dvipsnames]{xcolor}
\usepackage{bbm}
\usepackage[ruled,vlined, linesnumbered]{algorithm2e}
\usepackage[noend]{algpseudocode}

\usepackage[parfill]{parskip}

\usepackage[toc]{appendix}
\usepackage{minitoc}

\usepackage{bigints}
\usepackage{mathabx}

\makeatletter
\usepackage[colorinlistoftodos]{todonotes}

\newcommand{\innp}[1]{\left\langle #1 \right\rangle}

\newcommand{\cx}{\mathcal{X}}
\newcommand{\cy}{\mathcal{Y}}

\newcommand{\cz}{\mathcal{Z}}

\newcommand{\rr}{\mathbb{R}}
\newcommand{\ee}{\mathbb{E}}

\graphicspath{{imgs/}}

\makeatletter
\def\mathcolor#1#{\@mathcolor{#1}}
\def\@mathcolor#1#2#3{%
  \protect\leavevmode
  \begingroup
    \color#1{#2}#3%
  \endgroup
}
\makeatother

\newcommand*{\vsepfbox}[1]{%
  \begingroup
    \sbox0{\fbox{#1}}%
    \setlength{\fboxrule}{0pt}%
    \mbox{\kern-\fboxsep\fbox{\unhbox0}\kern-\fboxsep}%
  \endgroup
}

\theoremstyle{plain} \numberwithin{equation}{section}
\newtheorem{theorem}{Theorem}[section]
\numberwithin{theorem}{section}
\newtheorem{corollary}[theorem]{Corollary}

\newtheorem{lemma}[theorem]{Lemma}

\theoremstyle{definition}
\newtheorem{definition}[theorem]{Definition}

\newtheorem{example}[theorem]{Example}

\newtheorem{assumption}{Assumption}

\DeclareMathOperator*{\argmin}{argmin}

\makeatletter
\newcommand{\subalign}[1]{%
  \vcenter{%
    \Let@ \restore@math@cr \default@tag
    \baselineskip\fontdimen10 \scriptfont\tw@
    \advance\baselineskip\fontdimen12 \scriptfont\tw@
    \lineskip\thr@@\fontdimen8 \scriptfont\thr@@
    \lineskiplimit\lineskip
    \ialign{\hfil$\m@th\scriptstyle##$&$\m@th\scriptstyle{}##$\hfil\crcr
      #1\crcr
    }%
  }%
}
\makeatother

\newcommand{\R}{\mathbb{R}}

\newcommand\calZ{\mathcal{Z}}
\newcommand\hoeg{\textsc{hoeg+}}


\newcommand{\pa}[1]{\left(#1\right)}

\newcommand{\norm}[1]{\ensuremath{\left\lVert #1 \right\rVert}}

\title{Beyond first-order methods for non-convex non-concave min-max optimization}
\author{%
Abhijeet Vyas\\
  Purdue University\\
  \texttt{vyas26@purdue.edu} 
\and
Brian Bullins\\
  Purdue University\\
  \texttt{bbullins@purdue.edu} \\
}

\begin{document}

\maketitle

\begin{abstract}
We propose a study of structured non-convex non-concave min-max problems which goes beyond standard first-order approaches. Inspired by the tight understanding established in recent works \citep{adil2022optimal, lin2022perseus}, we develop a suite of higher-order methods which show the improvements attainable beyond the monotone and Minty condition settings. Specifically, we provide a new understanding of the use of discrete-time $p^{th}$-order methods for operator norm minimization in the min-max setting, establishing an $O(1/\epsilon^\frac{2}{p})$ rate to achieve $\epsilon$-approximate stationarity, under the weakened Minty variational inequality condition of \cite{diakonikolas2021efficient}. We further present a continuous-time analysis alongside rates which match those for the discrete-time setting, and our empirical results highlight the practical benefits of our approach over first-order methods.
\end{abstract}

\section{Introduction}
In this work we study the classic min-max problem:
\begin{equation}\label{eq:minmax}
\min_{x\in\cx}\max_{y \in \cy} f(x,y),
\end{equation}
whereby $f : \cx \times \cy \mapsto \R$ may be non-convex in $x$ and non-concave in $y$, and where we assume $f$ is smooth (up to various orders) in both $x$ and $y$. Problems of this form naturally arise in many different areas of machine learning, from training generative adversarial networks \citep{goodfellow2020generative}, to adversarial training/robustness \citep{madry2018towards, zhang2019theoretically}, along with more general robust optimization objectives \citep{ben2009robust}.

In the case where $f$ is smooth and convex-concave in $x$ and $y$, respectively, algorithms such as the extragradient method \citep{korpelevich1976extragradient} and its generalizations such as Mirror Prox \citep{nemirovski2004prox} are able to converge (in terms of duality gap) at a rate of $O(1/\epsilon)$, and this is known to be tight for first-order methods \citep{ouyang2021lower}. However, as the complexity (and inherent non-convexity) of the underlying models for these large-scale problems increases, so too does the need to expand \emph{beyond} the convex-concave setting.

Unfortunately, in the most general constrained non-convex non-concave setting, it would appear there is not much hope for efficiently finding a stationary point, as even doing so approximately has been shown to be FNP-complete \citep{daskalakis2021complexity}. Recently, however, there has been significant interest in overcoming these difficulties by looking instead at certain \emph{structured} non-convex non-concave problems. Specifically, we take inspiration from the previous work by \cite{diakonikolas2021efficient}, which, for the more general case of variational inequalities, establishes a useful characterization of problems defined in terms of a weakening of the standard Minty condition. \cite{diakonikolas2021efficient} further provide a generalization of the extragradient method, which they show manages to reach an $\epsilon$-approximate stationary point (w.r.t. the norm of the operator) at a $O(1/\epsilon^2)$ rate. In addition, by choosing the operator $F = [\nabla_x f, -\nabla_y f]^\top$, they establish the same rate for reaching stationary points of the unconstrained min-max optimization problem, under their \textit{weak}-MVI condition (which is weaker than assuming the variational inequality satisfies the Minty condition \citep{song2020optimistic, lin2022perseus}), and they further show how their results may extend to non-Euclidean settings.

Building on these efforts, we aim to understand the opportunities afforded by going \emph{beyond} the standard first-order extragradient approach. In particular, recent works \citep{adil2022optimal,lin2022perseus} have presented optimal $p^{th}$-order methods for approximately solving \emph{monotone} variational inequalities. In this work, we show how to relax this monotonicity condition, in a similar manner to that of \cite{diakonikolas2021efficient}, that is suitable to a variant of the higher-order extragradient method.

\subsection{Our Contributions}

Our main contributions are as follows.

\begin{itemize}
    \item We propose the \textsc{hoeg+} method, based on a higher-order variant \citep{adil2022optimal} of the extragradient algorithm \citep{korpelevich1976extragradient}, and we establish convergence results for structured non-convex non-concave min-max problems. 
    \item For a generalized \textit{weak}-MVI condition inspired by \citep{diakonikolas2021efficient}, we show that the $p^{th}$-order instance of our algorithm finds $\epsilon$-approximate stationary points at a $O(1/\epsilon^\frac{2}{p})$ rate. Furthermore, these are to our knowledge the first results that go beyond the $O(1/\epsilon^2)$ rate for the \textit{weak}-MVI setting.
    \item In the continuous-time regime, we show that the algorithm achieves an analogous rate of $\min_{0\leq t \leq T}\|F(z_t)\| \leq O(1/T^\frac{p}{2})$ under the \textit{weak}-MVI condition. Furthermore, under the co-monotonicity condition on the operator which implies \textit{weak}-MVI, the first-order instance of the algorithm has $\|F(z(t))\|$ decreasing and thus $\|F(z(t))\| \leq O(1/\sqrt{t})$.
    \item We provide a study of the empirical performance of the first- and second-order instances of our algorithm based on \textit{weak}-MVI examples with the standard min-max operator $F = (\nabla_x f, -\nabla_y f)$, as well as the competitive operator $F_{\alpha}$ introduced in \citep{vyas2022competitive}. 
\end{itemize}

\begin{table}[H]\label{tab:discrete}
\centering
	\begin{tabular}{ l c c c c}
	\toprule
		Algorithm \\(Reference)  & \multicolumn{3}{c}{Bounded range ($\alpha \leq f(x) \leq \beta\ \forall x \in \mathbb{R}^n$)} & General operator?\\
  \cmidrule(lr{0.2em}){2-4}
   & $p=1$ & $p = 2$ & $p > 2$ & \\
		\midrule
		Gradient Descent\\(Folklore) & $O\pa{1/\epsilon^2}$ & --- & --- & No \\
  \midrule
  		Cubic Regularization\\\citep{nesterov2006cubic} & --- & $O\pa{1/\epsilon^{\frac{3}{2}}}$ & --- & No \\
		\midrule
    		AR$p$ \citep{birgin2017worst} & $O\pa{1/\epsilon^2}$ & $O\pa{1/\epsilon^{\frac{3}{2}}}$ & $O\pa{1/\epsilon^{\frac{p+1}{p}}}$ & No \\
		\toprule
  	\toprule
		 & \multicolumn{3}{c}{$p^{th}$-Order \textit{weak}-MVI (Assumption \ref{assump:pwmvi})} & \\
  \cmidrule(lr{0.2em}){2-4}
    & $p=1$ & $p = 2$ & $p > 2$ & \\
		\midrule
		Extragradient+\\\citep{diakonikolas2021efficient} & $O(1/\epsilon^2)$ & --- & ---  & Yes\\
		\midrule
		\textsc{hoeg+} (Algorithm \ref{alg:mainalg})\\ Theorem \ref{theorem:balanced} \textbf{(This Paper)}  & $O(1/\epsilon^2)$ & $O(1/\epsilon)$ & $O(1/\epsilon^\frac{2}{p})$  & Yes\\
		\toprule
	\end{tabular}
 \caption{Discrete-time convergence rates for reaching $\epsilon$-approximate stationary points in the unconstrained Euclidean setting, under bounded range and $p^{th}$-order \textit{weak}-MVI assumptions, respectively, in addition to $p^{th}$-order smoothness. Note that for $p=1$, the \textit{weak}-MVI assumption leads to the same rate as in the standard (bounded range) non-convex setting, whereas our method highlights the improvements attainable when going beyond first-order methods, for $p > 1$.}
\end{table}

\begin{table}[H]
\centering
	\begin{tabular}{ l c c c}
  	\toprule
		 & Monotone & $p^{th}$-Order \textit{weak}-MVI (Assumption \ref{assump:pwmvi}) \\
		\midrule
		Algorithm 2 in \citep{lin2022continuous}\\Theorem 4.1& $O(1/\epsilon^\frac{2}{p})$ & --- \\
		\midrule
		\textsc{hoeg+}(Algorithm \ref{alg:mainalg})\\Theorem \ref{thm:continuoustime} \textbf{(This Paper)}  & $O(1/\epsilon^\frac{2}{p})$ & $O(1/\epsilon^\frac{2}{p})$ \\
		\toprule
	\end{tabular}
 \caption{Continuous-time convergence rates for reaching $\epsilon$-approximate stationary points in the unconstrained Euclidean setting, under monotonicity and $p^{th}$-order \textit{weak}-MVI assumptions, in addition to $p^{th}$-order smoothness.}
\end{table}

\subsection{Additional Related Works}

Recently, several works \citep{bullins2022higher, jiang2022generalized,adil2022optimal, lin2022perseus} have shown how to achieve $\epsilon$-approximate weak solutions to monotone variational inequalities under $p^{th}$-order oracle access, wherein the $O(1/\epsilon^\frac{2}{p+1})$ rates of \cite{adil2022optimal, lin2022perseus} (which remove additional logarithmic factors) are optimal. However, these works have focused predominantly on the convex-concave (monotone operator) setting, though \cite{lin2022perseus} also show a rate of $O(1/\epsilon^\frac{2}{p})$ under the Minty condition. In addition, \cite{lin2022continuous} provide a rate of $O(1/\epsilon^\frac{2}{p})$ to reach a point with small operator norm in the monotone setting.

In more typical non-convex optimization settings, especially given the ubiquity of large-scale models for modern machine learning, much attention has been paid to developing methods which can find approximate stationary points (that is, places where $\norm{\nabla f(x)} \leq \epsilon$). One such example is the cubic regularization method \citep{nesterov2006cubic} and its higher-order generalization \citep{birgin2017worst}, providing rates that match the lower bounds \citep{carmon2020lower}. Furthermore, even first-order (as well as Hessian-vector product-based) methods have also been shown to benefit from higher-order smoothness, with certain accelerated methods giving rise to even faster rates \citep{agarwal2017finding, carmon2017convex, carmon2018accelerated, jin2018accelerated, li2022restarted}, though there remains a small gap between upper and lower bounds \citep{carmon2021lower}.

The continuous-time regime has proven to be effective in analyzing the performance of algorithms for minimization problems \citep{latz2021analysis,wilson2016lyapunov,wibisono2016variational,shi2021understanding}, min-max optimization problems \citep{lin2022continuous,vyas2022competitive}, and in the study of continuous games \citep{mazumdar2020gradient}. \cite{latz2021analysis} analyzes stochastic gradient descent in the continuous-time regime while \cite{wilson2016lyapunov,wibisono2016variational,shi2021understanding} provide a continuous-time perspective of the accelerated variants of gradient descent. \cite{lin2022continuous} study the continuous-time version of the dual-extrapolation algorithm, and \cite{vyas2022competitive} build on the competitive gradient descent algorithm \citep{schafer2019competitive} to design a new min-max algorithm based on re-scaling the cross-terms of the Jacobian of the operator.
\section{Preliminaries}

In this section we discuss the notations and key assumptions that formulate the setting for our algorithm.
We start by defining the approximate and exact stationary points of an operator $F$. 

\begin{definition}[Stationary points]
A point $z\in \cz\subseteq\rr^d$, is an $\epsilon$-approximate stationary point if
$$\|F(z)\|\leq \epsilon,$$ and it is an exact stationary point if
$$\|F(z)\|=0.$$
\end{definition}

We then define the solution set to the Stampachchia variational inequality for any field $F$.
\begin{definition}[Solution set 
$\cz^*$]
We refer to the solutions of the Stampachchia Variational Inequality (SVI):

$$\langle F(z^*),z-z^*\rangle\geq 0 $$
 as the set $\cz^* \subseteq \cz$. 
\end{definition}
To solve the problem in Eq.~\eqref{eq:minmax} we consider the field $F =  \begin{bmatrix}
\nabla_x f\\
-\nabla_y f 
\end{bmatrix}$, derived from the function $f$. For the unconstrained setting, i.e, $\cz=\rr^d$, we have $\|F(z^*)\|=0~\forall z^* \in \cz^*$. Furthermore all stationary points of Eq.\eqref{eq:minmax} satisfy the SVI. We assume $\cz^*\neq \phi$.

We start by defining the monotonicity of an operator. 

\begin{definition}[Monotonicity]\label{def:monotonicity}
An operator is monotone if for all $z_a,z_b\in \cz$,
$$\langle F(z_a)-F(z_b),z_a-z_b \rangle \geq 0$$
\end{definition}

Standard examples of monotone operators include the gradient $\nabla f$ of a convex function $f(x)$ and the concatenated gradient $(\nabla_x f, -\nabla_y f)$ for a convex-concave $f(x,y)$ function. 

We now present the definition of comonotonicity condition which generalizes monotonicity. This was used as the key non-monotone condition to provide first-order algorithms in \cite{lee2021fast}.  

\begin{definition}[$\rho$-comonotone]
An operator $F$ is $\rho$-comonotone if,
$$\langle F(z_a)-F(z_b),z_a-z_b \rangle > \rho \|F(z_a)-F(z_b)\| ~\forall z_a,z_b \in \cz$$
\end{definition}
Note that for $\rho$-comonotonicity implies monotonicity for $\rho\geq 0$.

Inspired by the \textit{weak}-MVI condition in \cite{diakonikolas2021efficient} we generalise to the \textit{weak}-MVI condition to $p$-dimensions. This condition is the key condition under which we prove the convergence of our algorithm.

\begin{assumption}[$p^{th}$-Order \textit{weak}-\textsc{mvi}]\label{assump:pwmvi}
There exists $z^* \in \cz^*$ such that:
\begin{equation}\label{assmpt:balanced}\tag{\textsc{a}$_1$}
    (\forall z \in \rr^d):\quad 
    \innp{F(z), z - z^*} \geq -\frac{\rho}{2} \|F(z)\|^{\frac{p+1}{p}},
\end{equation}
for some parameter $\rho\geq0$. 
\end{assumption}

For $\rho=0$ the condition is the well-known MVI condition \cite{mertikopoulos2018optimistic}. Furthermore for $p=1$, $\rho$-comonotonicity implies $-\frac{\rho}{2}$ \textit{weak}-MVI. Overall, monotonicity implies MVI which implies \textit{weak}-MVI. 

Finally we define the smoothness of an operator.
\begin{assumption}[$p^{th}$-Order Smoothness]
\begin{equation}\label{assmpt:smooth}\tag{\textsc{a}$_2$}
    \|F(z_b)-\tau_{p-1}(z_b,z_a)\|\leq \frac{L_p}{p!}\|z_b-z_a\|^p,\forall z_a,z_b \in \cz
\end{equation}
\end{assumption}

We now define some quantities used in the algorithm.
\begin{definition}
We define $\tau_p$ as the Taylor approximation of $F$ at $z_b$ centered at $z_a$,
\begin{equation}\label{eq:tau-def}
    \tau_p(z_a,z_b) :=  \sum_{i=0}^p \nabla^i F(z_a)[z_b-z_a]^i
\end{equation}
\end{definition}

\begin{definition}
We define $\Phi_p$ as the regularized Taylor approximation,
\begin{align}\label{eq:phi-def}
    &\Phi_p(z_b,z_a) :=   \tau_{p-1}(z_b,z_a) +\frac{2L_p}{p!}\|z_b-z_a\|^{p-1}(z_b-z_a).
\end{align}
\end{definition}
\section{Convergence Results}

In this section we will present our algorithm and discuss in detail the conditions under which it converges to stationary points. we will start by analysing the behaviour of the algorithm under the limit of the learning rate approaching zero. Under the said limit the algorithm can be represented as a dynamical system, and we will analyze the system which is the continuous-time analogue of our algorithm with the conditions of weak-MVI and comonotonicity on the operator. We will then proceed to analyze the discrete-time algorithm under the generalized weak-MVI condition.  

We now present our algorithm \hoeg, a higher-order variant of the extra-gradient method. The algorithm is as follows.

\begin{algorithm}[H]
  $z_0 \in \cx \times\cy$;\\
  \For{k = 0 to k = K}{
        $z_{k+\frac{1}{2}} = z'\ \textrm{s.t.}\ \Phi(z',z_k)=0$\\
    $\lambda_k = \frac{1}{2}\|z_{k+\frac{1}{2}} - z_k\|^{1-p}$
    \\
    $z_{k+1} = \argmin_{z'' \in \rr^d} \Big\{ \innp{F(z_{k+\frac{1}{2}}), z'' - z_{k+\frac{1}{2}}} + \frac{L_p}{p!\lambda_k}\|z_{k}'' - z_k\|^2\Big\}$= $z_k - \frac{p!\lambda_k}{2L_p} F(z_{k+\frac{1}{2}})$
  }
      \Return $z_{out} = \argmin_{z_{k+\frac{1}{2}} , 0\leq k\leq K}\|z_{k+\frac{1}{2}}\|$ \caption{\textsc{hoeg+}}\label{alg:mainalg}
\end{algorithm}

We now discuss the convergence of the dynamics of the system of differential equations obtained in the limit of the higher-order extra-gradient method. The continuous-time dynamics of the $p^{th}$-order dual extrapolation algorithm \citep{nesterov2007dual}, as given by \cite{lin2022continuous}, are

\begin{equation}\label{sys:DE}
\begin{array}{lll}
\dot{s}(t) = - \tfrac{F(z(t))}{\|F(z(t))\|^{1-1/p}}, & v(t) = z_0 + s(t), & z(t) - v(t) + \tfrac{F(z(t))}{\|F(z(t))\|^{1-1/p}} = \textbf{0}, 
\end{array} 
\end{equation}

While \cite{lin2022continuous} shows that \eqref{sys:DE} is the dynamical system for dual extrapolation, we know that dual extrapolation is equivalent to the extra-gradient method for the unconstrained Euclidean setting \citep{nesterov2007dual}, and so the system of \eqref{sys:DE} is the continuous-time analog of \hoeg.

\begin{theorem}\label{thm:continuoustime}
    Let the operator $F$ satisfy~\eqref{assmpt:balanced} with $\rho < 2$. Then for the continuous-time dynamics~\eqref{sys:DE} we have $\min_{0 \leq s\leq t}  \|F(z(s))\|^2$ follows $O(\frac{1}{t^p})$.
\end{theorem}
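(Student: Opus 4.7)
The plan is to use a Lyapunov argument with $V(t) = \tfrac{1}{2}\|v(t) - z^*\|^2$, which is the natural potential for dual extrapolation since $\dot v(t) = \dot s(t)$ is available in closed form from \eqref{sys:DE}. I would first compute
\[
\frac{d}{dt} V(t) \;=\; \innp{v(t)-z^*,\, \dot v(t)} \;=\; -\,\frac{\innp{v(t)-z^*,\, F(z(t))}}{\|F(z(t))\|^{1-1/p}},
\]
using $\dot s(t) = -F(z(t))/\|F(z(t))\|^{1-1/p}$.

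Next, the third equation of \eqref{sys:DE} gives the decomposition $v(t) - z^* = (z(t) - z^*) + F(z(t))/\|F(z(t))\|^{1-1/p}$. Substituting and expanding the inner product produces a cross term $\innp{z(t)-z^*, F(z(t))}$, which is exactly the quantity controlled by the $p^{th}$-order \textit{weak}-MVI condition~\eqref{assmpt:balanced}, together with a diagonal term equal to $\|F(z(t))\|^{(p+1)/p}$. Applying \eqref{assmpt:balanced} to the cross term and combining exponents via $(p+1)/p - (p-1)/p = 2/p$ yields
\[
\frac{d}{dt} V(t) \;\leq\; -\Bigl(1 - \tfrac{\rho}{2}\Bigr)\,\|F(z(t))\|^{2/p}.
\]
The hypothesis $\rho < 2$ is precisely what is needed to keep this coefficient strictly positive.

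The final step is to integrate from $0$ to $t$. Since $V(t) \geq 0$ and $v(0) = z_0$, we obtain
\[
\Bigl(1 - \tfrac{\rho}{2}\Bigr)\int_0^t \|F(z(s))\|^{2/p}\, ds \;\leq\; V(0) \;=\; \tfrac{1}{2}\|z_0 - z^*\|^2,
\]
and bounding the minimum by the time average gives
\[
\min_{0\le s\le t} \|F(z(s))\|^{2/p} \;\leq\; \frac{\|z_0 - z^*\|^2}{(2-\rho)\, t},
\]
which upon raising to the $p^{th}$ power yields $\min_{0\le s\le t}\|F(z(s))\|^2 = O(1/t^p)$.

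I do not expect a serious obstacle: all steps are algebraic once the right Lyapunov is identified. The one subtle point is the decomposition $v(t) - z^* = (z(t)-z^*) + F(z(t))/\|F(z(t))\|^{1-1/p}$, which must be read off from the algebraic constraint in \eqref{sys:DE} rather than from the ODE itself; this is what allows the diagonal $\|F(z(t))\|^{(p+1)/p}$ term to absorb most of the \textit{weak}-MVI slack and leave a clean $(1-\rho/2)$ factor. Everything else is exponent bookkeeping and applying that the minimum is at most the average.
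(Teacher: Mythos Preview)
Your argument is correct and follows essentially the same Lyapunov approach as the paper, with the minor (and cleaner) variation that you center the potential at $z^*$ via $V(t)=\tfrac{1}{2}\|v(t)-z^*\|^2$, whereas the paper uses $\mathcal{E}(t)=\|s(t)\|^2=\|v(t)-z_0\|^2$ and must carry along and later absorb the cross term $2\langle \dot s(t), z^*-z_0\rangle$ by completing the square. Both routes reduce, via the algebraic constraint in \eqref{sys:DE} and the weak-MVI bound, to $(1-\rho/2)\int_0^t\|F(z(s))\|^{2/p}\,ds\le \tfrac{1}{2}\|z_0-z^*\|^2$ (up to constants), so the final rate and the role of the hypothesis $\rho<2$ are identical.
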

\begin{proof}

Let us consider the lyapunov function $\mathcal{E} = \|s(t)\|^2$.

Expanding this equality with any $z \in \cz $, we have
\begin{equation*}\label{eq:lyapunov-derivative}
\frac{d\mathcal{E}(t)}{dt} = \frac{2(\langle F(z(t)), z_0 - z\rangle + \langle F(z(t)), z - z(t)\rangle + \langle F(z(t)), z(t) - v(t)\rangle)}{\|F(z(t))\|^{1 - 1/p}}
\end{equation*}
Since $\dot{s}(t) = - \tfrac{F(z(t))}{\|F(z(t))\|^{1-1/p}}$, we have
\begin{equation*}
\frac{\langle F(z(t)), z_0 - z\rangle}{\|F(z(t))\|^{1 - 1/p}} = -\langle \dot{s}(t), z_0 - z\rangle. 
\end{equation*}
Since $F$ satisfies the weak-MVI assumption we have,
$$\langle F(z(t)),z-z(t)\rangle \leq \frac{\rho}{2}\|F(z(t))\|^\frac{p+1}{p}-\langle F(z(t)),z^*-z \rangle$$

Since $z(t) - v(t) + \tfrac{F(z(t))}{\|F(z(t))\|^{1-1/p}} = \textbf{0}$, we have
\begin{equation*}
\frac{\langle F(z(t)), z(t) - v(t)\rangle}{\|F(z(t))\|^{1 - 1/p}} = -\|F(z(t))\|^{\frac{2}{p}}. 
\end{equation*}
Plugging these pieces together in Eq.~\eqref{eq:lyapunov-derivative} yields that, for any $z \in \cz$, we have
\begin{align}\label{inequality:nonasymptotic-second}
\frac{d\mathcal{E}(t)}{dt} &\leq 2\langle \dot{s}(t), z-z_0\rangle + 2\langle \dot s, z^* - z\rangle - (2-\rho)\|F(z(t))\|^{\frac{2}{p}}.\\
&\leq 2\langle \dot{s}(t), z^*-z_0 \rangle - (2-\rho)\|F(z(t))\|^{\frac{2}{p}}. 
\end{align}

Integrating this inequality over $[0, t]$, using $\mathcal{E}(0) = 0$ and observing $\langle s(t), z^*-z_0\rangle - \|s(t)\|^2 \leq \frac{1}{4}\|z^*-z_0\|^2$, $\|z_0 - z^*\| \leq D$, we have,
\begin{equation*}
\int_0^t \|F(z(s))\|^{\frac{2}{p}} \; ds \leq \frac{1}{2-\rho}\left(\langle s(t), z^*-z_0\rangle - \|s(t)\|^2\right) \leq \frac{D^2}{4(2-\rho)}, \quad \textnormal{for all } t \geq 0.  
\end{equation*}

Now let $m= \min_{0 \leq s\leq t}  \|F(z(s))\|^{\frac{2}{p}} = (\min_{0 \leq s\leq t}  \|F(z(s))\|)^{\frac{2}{p}}$, where the second equality is true since $\|.\|^\frac{2}{p}$ is increasing for $p\geq 1$. 
Then we have,
\begin{align*}
mt \leq \int_0^t \|F(z(s))\|^{\frac{2}{p}} \; ds \leq \frac{D^2}{4(2-\rho)},   \rightarrow
\min_{0 \leq s\leq t}  \|F(z(s))\|^2 \leq \frac{D^{2p}}{4(2-\rho)^pt^p}=O(\frac{1}{t^p}) \quad \textnormal{for all } t\geq 0.
\end{align*}
\end{proof}
  
While the weak-MVI condition with $\rho<2$ is sufficient to obtain the desired $O(\frac{1}{t^p})$ rates, we show that for an operator that is comonotone with $\rho>-1$ the dynamics \eqref{sys:DE} are such that $\|F(z(t))\|$ is decreasing furthermore since comonotonicity with $\rho>-1$ implies weak-MVI with $\rho>2$ we have the conditions of Theorem \ref{thm:continuoustime} satisfied and thus $\|F(z(t))
\| = \min_{0\leq s \leq t}\|F(x(s))\| \leq O(\frac{1}{t})$.

\begin{corollary}
    If the operator $F$ is $\rho$-comonotone with $\rho >-1$, for the time dynamics~\eqref{sys:DE} with $p=1$ we have $\|F(z(t))\|$ is decreasing and $\|F(z(t))\|^2$ follows $O(\frac{1}{t})$.
\end{corollary}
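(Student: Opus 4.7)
The plan is to establish the two assertions in sequence: first that $\|F(z(t))\|$ is non-increasing under the dynamics, and second that $\|F(z(t))\|^2 = O(1/t)$ as a consequence. The rate portion is essentially immediate from Theorem~\ref{thm:continuoustime}: setting $z_b = z^*$ in the comonotonicity inequality and using $F(z^*) = \mathbf{0}$ shows that $\rho$-comonotonicity implies weak-MVI with parameter $-2\rho$, so the hypothesis $\rho > -1$ gives exactly the condition $-2\rho < 2$ required by Theorem~\ref{thm:continuoustime}. That theorem then yields $\min_{0 \le s \le t}\|F(z(s))\|^2 = O(1/t)$, and once $\|F(z(\cdot))\|$ is known to be non-increasing this minimum is attained at the endpoint, delivering the pointwise bound.

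The substantive work is proving monotonicity of $\|F(z(t))\|$. For $p = 1$ the exponent $1 - 1/p$ vanishes, so the dynamics~\eqref{sys:DE} reduce to $\dot s = -F(z)$, $v = z_0 + s$, and the algebraic constraint $z = v - F(z)$. Differentiating the constraint along the flow and using $\dot v = \dot s$ yields the implicit-flow relation $(I + J(z))\dot z = -F(z)$, where $J(z) := \nabla F(z)$. Writing $u := -\dot z$, so that $F = (I + J)u$, a direct computation of $\tfrac{d}{dt}\tfrac{1}{2}\|F(z)\|^2 = F^\top J \dot z$ gives
\[
\tfrac{d}{dt}\tfrac{1}{2}\|F(z(t))\|^2 \;=\; -\bigl(u^\top J u + \|Ju\|^2\bigr).
\]

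The final ingredient is to bound $u^\top J u$ from below using comonotonicity. Applying the comonotonicity inequality at the pair $(z + hw, z)$, dividing by $h^2$ and sending $h \to 0$ yields the pointwise Jacobian form $w^\top J(z) w \ge \rho \|J(z) w\|^2$ for every $w$; inserted with $w = u$ this produces $\tfrac{d}{dt}\tfrac{1}{2}\|F(z(t))\|^2 \le -(1 + \rho)\|J u\|^2 \le 0$ exactly when $\rho > -1$, which is the claimed monotonicity. Combined with the first paragraph, the corollary follows.

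The main obstacle I anticipate is justifying the infinitesimal Jacobian form of comonotonicity rigorously, which requires enough regularity on $F$ (the smoothness assumption~\eqref{assmpt:smooth} suffices for $p \ge 1$). A related technical point, namely well-posedness of the implicit relation $(I + J)\dot z = -F(z)$, is handled for free: if $(I + J)u = 0$ then $Ju = -u$, and comonotonicity gives $-\|u\|^2 = u^\top J u \ge \rho\|u\|^2$, forcing $u = 0$ whenever $\rho > -1$, so $I + J$ remains invertible along the flow.
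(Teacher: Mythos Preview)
Your proposal is correct and follows essentially the same approach as the paper: reduce the rate claim to Theorem~\ref{thm:continuoustime} via the implication ``comonotone $\Rightarrow$ weak-MVI,'' then show $\|F(z(t))\|$ is non-increasing by differentiating along the $p=1$ dynamics and invoking the infinitesimal Jacobian form of comonotonicity. The only cosmetic differences are that the paper computes the derivative of $\tfrac{1}{2}\|v(t)-z(t)\|^2$ (equal to $\tfrac{1}{2}\|F(z(t))\|^2$) via $\dot v - \dot z = \nabla F(z)\dot z$ rather than introducing $u=-\dot z$, and it does not record your side remark on invertibility of $I+J$.
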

\begin{proof}
Since the operator is $\rho$-comonotone with $\rho >-1$ we have
$$\langle F(z_1)-F(z_2),z_1-z_2 \rangle > -\|F(z_1)-F(z_2)\| ~\forall z_1,z_2 \in \cz$$
Setting $z_2=z^*$ where $\|F(z^*)\|=0$, we obtain that $F$ satsifies assumption \eqref{assmpt:balanced} with $\rho < 2$ and thus from Theorem \ref{thm:continuoustime} we have,
\begin{align}\label{eqn:min}
    \min_{0 \leq s\leq t}  \|F(z(s))\|^2 \leq O(\frac{1}{t})
\end{align}
Furthermore setting $z_1 = z +\tau \delta z,~z_2 = z$ and taking limit $\tau \rightarrow 0$ we obtain
$$\lim_{\tau \rightarrow 0} \langle F(z +\tau \delta z)-F(z),z +\tau \delta z-z \rangle > - \lim_{\tau \rightarrow 0}\|F(za +\tau \delta z)-F(z)\|^2$$
this gives
$$\lim_{\tau \rightarrow 0} \langle \frac{F(z +\tau \delta z)-F(z)}{\tau},\delta z \rangle > - \lim_{\tau \rightarrow 0}\|(\frac{F(z +\tau \delta z)-F(z)}{\tau})^2\|$$
which gives
\begin{align}\label{eqn:operator-eigen}
\langle \nabla F(z) \delta z,\delta z \rangle + \|\nabla F(z) \delta z\|^2 \geq 0
\end{align}
We now prove that  $t \mapsto \|F(z(t))\|$ is non-increasing for the dynamics ~\eqref{sys:DE}. 
The dynamics are,
\begin{equation*}
\dot{s}(t) = -F(z(t)), \quad v(t) = z_0 + s(t), \quad z(t) - v(t) + F(z(t)) = \textbf{0}.
\end{equation*}
In this case, we can write $z(t) = (I + F)^{-1}v(t)$. Since $v(\cdot)$ is continuously differentiable, we have $x(\cdot)$ is also continuously differentiable. Define the function $g(t) = \frac{1}{2}\|v(t) - z(t)\|^2$. 
Then we have, 
\begin{equation*}
\dot{g}(t) = \langle \dot{v}(t) - \dot{z}(t), v(t) - z(t) \rangle = -\langle \dot{v}(t) - \dot{z}(t), \dot{v}(t) \rangle = -\|\dot{v}(t) - \dot{z}(t)\|^2 - \langle \dot{v}(t) - \dot{z}(t), \dot{z}(t) \rangle. 
\end{equation*}
Now
$$\dot{v}(t) - \dot{z}(t) = \nabla F(z(t)) \dot{z}(t)$$
thus choosing $z = z(t), \delta z = \dot z (t)$ in Eq.~\eqref{eqn:operator-eigen} we obtain,
\begin{align*}
 \dot{g}(t) = -(\|\nabla F (z(t)) \dot z(t) \|^2+\langle \nabla F(z(t)) \dot z,\dot z \rangle) < 0
\end{align*}
Thus $\|F(z(t))\|$ is decreasing and combined with \eqref{eqn:min} we have, 
$$ \|F(z(t))\|^2 \leq O(\frac{1}{t})$$ which is the statement of the corollary.
\end{proof}

 Note that $-1$-comontoncitiy is sufficient for $\|F(z(t))\|$ to be non-increasing but violates the conditions of Theorem \ref{thm:continuoustime}.
 
\subsection{Discrete Time}

For discrete time we discuss the results in three different settings discussed above. We start by establishing several key lemmas, the first of which comes from \cite{adil2022optimal}.

\begin{lemma}[\cite{adil2022optimal}]\label{lemma:supportive}
For the iterates of the Algorithm \ref{alg:mainalg} $\{z_k\}_{k=1}^K$ and an SVI solutio $z^*\in \cz^*$ we have

\begin{align}\label{eq:main}
    \sum_{i=0}^K \lambda_k \frac{p!}{L_p}\langle F(z_{k+\frac{1}{2}}),z_{k+\frac{1}{2}}-z \rangle &\leq \|z^*-z_0\|^2-\frac{15}{16}\sum_{k=0}^K\|z_k-z_{k+\frac{1}{2}}\|^2
\end{align}
\end{lemma}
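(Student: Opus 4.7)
The plan is to run the classical potential argument for extragradient-type methods, adapted to the higher-order setting as in Adil--Bullins--Jambulapati--Sachdeva. I would take as potential $\|z_k - z^*\|^2$ and expand along the update rule $z_{k+1} = z_k - \frac{p!\lambda_k}{2L_p}F(z_{k+\frac{1}{2}})$ to get
\begin{equation*}
\|z_{k+1}-z^*\|^2 = \|z_k - z^*\|^2 - \tfrac{p!\lambda_k}{L_p}\langle F(z_{k+\frac{1}{2}}), z_k - z^*\rangle + \tfrac{(p!\lambda_k)^2}{4L_p^2}\|F(z_{k+\frac{1}{2}})\|^2,
\end{equation*}
and then split $\langle F(z_{k+\frac{1}{2}}), z_k - z^*\rangle = \langle F(z_{k+\frac{1}{2}}), z_{k+\frac{1}{2}} - z^*\rangle + \langle F(z_{k+\frac{1}{2}}), z_k - z_{k+\frac{1}{2}}\rangle$. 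Rearranging yields
\begin{equation*}
\tfrac{p!\lambda_k}{L_p}\langle F(z_{k+\frac{1}{2}}), z_{k+\frac{1}{2}} - z^*\rangle \leq \|z_k-z^*\|^2 - \|z_{k+1}-z^*\|^2 + \tfrac{(p!\lambda_k)^2}{4L_p^2}\|F(z_{k+\frac{1}{2}})\|^2 - \tfrac{p!\lambda_k}{L_p}\langle F(z_{k+\frac{1}{2}}), z_k-z_{k+\frac{1}{2}}\rangle,
\end{equation*}
which will telescope in $k$, with the $\|z_0-z^*\|^2$ term on the right.

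The heart of the argument is to show that the two error terms on the right, namely $\tfrac{(p!\lambda_k)^2}{4L_p^2}\|F(z_{k+\frac{1}{2}})\|^2$ and $-\tfrac{p!\lambda_k}{L_p}\langle F(z_{k+\frac{1}{2}}), z_k-z_{k+\frac{1}{2}}\rangle$, can together be absorbed into $-\frac{15}{16}\|z_k-z_{k+\frac{1}{2}}\|^2$. The key tool here is the extrapolation condition $\Phi_p(z_{k+\frac{1}{2}}, z_k)=0$, i.e.
\begin{equation*}
\tau_{p-1}(z_{k+\frac{1}{2}},z_k) = -\tfrac{2L_p}{p!}\|z_{k+\frac{1}{2}}-z_k\|^{p-1}(z_{k+\frac{1}{2}}-z_k),
\end{equation*}
combined with $p^{th}$-order smoothness \eqref{assmpt:smooth}, which gives $F(z_{k+\frac{1}{2}}) = -\tfrac{2L_p}{p!}\|z_{k+\frac{1}{2}}-z_k\|^{p-1}(z_{k+\frac{1}{2}}-z_k) + e_k$ with $\|e_k\|\leq \tfrac{L_p}{p!}\|z_{k+\frac{1}{2}}-z_k\|^p$. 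Substituting this, together with the step-size choice $\lambda_k = \tfrac{1}{2}\|z_{k+\frac{1}{2}}-z_k\|^{1-p}$, into the error terms turns them into homogeneous quantities in $\|z_k-z_{k+\frac{1}{2}}\|$, with the leading contribution being $-\|z_k-z_{k+\frac{1}{2}}\|^2$ (from the dominant rank-one piece of $F(z_{k+\frac{1}{2}})$) plus lower-order pieces controlled by $\|e_k\|$.

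The hard part, and the only place real care is required, will be controlling the cross terms generated by $e_k$: I would apply Cauchy--Schwarz and Young's inequality of the form $2ab \leq \eta a^2 + \eta^{-1} b^2$ with a carefully tuned $\eta$ so that the smoothness residual contributes at most $\tfrac{1}{16}\|z_k-z_{k+\frac{1}{2}}\|^2$, yielding the final $\tfrac{15}{16}$ constant. Summing the resulting per-step inequality from $k=0$ to $K$, dropping the nonnegative term $\|z_{K+1}-z^*\|^2$, and noting that the difference telescope leaves $\|z_0-z^*\|^2$ on the right produces exactly \eqref{eq:main}.
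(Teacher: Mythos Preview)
Your approach is essentially the paper's own argument, specialised to the unconstrained Euclidean case. The paper works in the Bregman framework: it applies Tseng's lemma (Lemma~\ref{lem:Tseng}) to the $z_{k+1}$ step and the three-point identity (Lemma~\ref{lem:3point}) to the $z_{k+\frac{1}{2}}$ step, sums the two, and is left with exactly your telescoping potential plus an error of the form $\lambda_k\frac{p!}{L_p}\langle \tau_{p-1}-F(z_{k+\frac{1}{2}}),\,z_{k+\frac{1}{2}}-z_{k+1}\rangle$, which it bounds via Cauchy--Schwarz, smoothness, and the $\sqrt{xy}\le 2x+\tfrac{1}{8}y$ Young split. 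In the unconstrained Euclidean setting Tseng's lemma is an \emph{equality} and coincides with your quadratic expansion of $\|z_{k+1}-z^*\|^2$, so the two routes are not just analogous but algebraically identical up to regrouping.

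One caveat on constants. Your claim that ``the leading contribution [is] $-\|z_k-z_{k+\frac{1}{2}}\|^2$'' undercounts: with $e_k=0$ the term $\tfrac{(p!\lambda_k)^2}{4L_p^2}\|F(z_{k+\frac{1}{2}})\|^2$ already contributes $+\tfrac{1}{4}\|z_k-z_{k+\frac{1}{2}}\|^2$, so the leading balance is $-\tfrac{3}{4}\|z_k-z_{k+\frac{1}{2}}\|^2$, not $-1$. In fact your two error terms sum \emph{exactly} to $\|z_{k+1}-z_{k+\frac{1}{2}}\|^2-\|z_k-z_{k+\frac{1}{2}}\|^2$, and with the stated step sizes one has $\|z_{k+1}-z_{k+\frac{1}{2}}\|\le\tfrac{3}{4}\|z_k-z_{k+\frac{1}{2}}\|$, so the sharpest constant your decomposition can deliver is $\tfrac{7}{16}$, not $\tfrac{15}{16}$; no tuning of $\eta$ in Young changes this, since the identity is exact. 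The paper avoids this by keeping $-\omega(z_{k+1},z_{k+\frac{1}{2}})$ as a separate reservoir and absorbing the Young residual into it rather than bounding $\|z_{k+1}-z_{k+\frac{1}{2}}\|$ directly. This constant discrepancy is immaterial for the downstream $O(1/\epsilon^{2/p})$ rate, and in any case appears to reflect normalization choices inherited from \cite{adil2022optimal}; but if you want the $\tfrac{15}{16}$ on the nose you should mirror the paper's organization of the cross term rather than substituting $F=w+e_k$ into both error pieces separately.
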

We provide the proof of this lemma for completeness in Appendix \ref{subsec:supportive-dt}

\begin{lemma}\label{lemma:upper-b}
For $z_{k+\frac{1}{2}},z_k$ obtained from Algorithm \ref{alg:mainalg} we have
\begin{equation}\label{upper-b}
    \|F(z_{k+\frac{1}{2}})\|\leq \frac{3L_p}{p!}\|z_{k+\frac{1}{2}}-z_k\|^p
\end{equation}
\end{lemma}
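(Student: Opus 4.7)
The plan is a short proof by triangle inequality, using the defining equation of the implicit half-step together with the $p^{th}$-order smoothness assumption~\eqref{assmpt:smooth}.

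First I would unpack the condition $\Phi(z_{k+\frac{1}{2}}, z_k) = 0$ that defines the half-step. By the definition of $\Phi_p$ in~\eqref{eq:phi-def}, this is exactly
\[
\tau_{p-1}(z_{k+\frac{1}{2}}, z_k) = -\tfrac{2L_p}{p!}\,\|z_{k+\frac{1}{2}} - z_k\|^{p-1}\,(z_{k+\frac{1}{2}} - z_k),
\]
so taking norms on both sides yields the exact identity
\[
\|\tau_{p-1}(z_{k+\frac{1}{2}}, z_k)\| = \tfrac{2L_p}{p!}\,\|z_{k+\frac{1}{2}} - z_k\|^p.
\]

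Next I would invoke the smoothness assumption~\eqref{assmpt:smooth} with $z_b = z_{k+\frac{1}{2}}$ and $z_a = z_k$, which controls the Taylor remainder:
\[
\|F(z_{k+\frac{1}{2}}) - \tau_{p-1}(z_{k+\frac{1}{2}}, z_k)\| \leq \tfrac{L_p}{p!}\,\|z_{k+\frac{1}{2}} - z_k\|^p.
\]
Combining these two bounds via the triangle inequality
\[
\|F(z_{k+\frac{1}{2}})\| \leq \|F(z_{k+\frac{1}{2}}) - \tau_{p-1}(z_{k+\frac{1}{2}}, z_k)\| + \|\tau_{p-1}(z_{k+\frac{1}{2}}, z_k)\|
\]
adds the two terms $\tfrac{L_p}{p!}\|z_{k+\frac{1}{2}}-z_k\|^p$ and $\tfrac{2L_p}{p!}\|z_{k+\frac{1}{2}}-z_k\|^p$, giving the claimed bound of $\tfrac{3L_p}{p!}\|z_{k+\frac{1}{2}}-z_k\|^p$.

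There is no real obstacle here: the lemma is essentially a one-line calculation once one recognizes that $\Phi(z_{k+\frac{1}{2}}, z_k) = 0$ pins down the regularization term to be exactly $2$ times the smoothness-remainder bound. The only thing worth double-checking is the sign and argument convention in the definition of $\tau_{p-1}$ (first argument is the target, second is the center of expansion), so that the smoothness assumption is being applied to the correct pair; this is consistent with how~\eqref{assmpt:smooth} is stated in the preliminaries.
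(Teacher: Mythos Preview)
Your argument is correct and matches the paper's proof essentially line for line: both use the triangle inequality (equivalently, the reverse triangle inequality $\|a\|-\|b\|\le\|a-b\|$) together with the smoothness bound~\eqref{assmpt:smooth} and the identity $\|\tau_{p-1}(z_{k+\frac{1}{2}},z_k)\| = \tfrac{2L_p}{p!}\|z_{k+\frac{1}{2}}-z_k\|^p$ coming from $\Phi(z_{k+\frac{1}{2}},z_k)=0$. There is nothing to add.
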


\begin{proof}
From \eqref{assmpt:smooth} and noting that for any two vectors $a\in \rr^d,b\in\rr^d$ we have,
\begin{equation}
    \|a\|-\|b\|\leq \|a-b\|
\end{equation}
we have
\begin{equation}
    \|F(z_{k+\frac{1}{2}})\|\leq \tau_{p-1}(z_{k+\frac{1}{2}},z_k)+\frac{L_p}{p!}\|z_{k+\frac{1}{2}}-z_k\|^p
\end{equation}
Combining with the update rule of Algorithm \ref{alg:mainalg}, we have $\|\tau_{p-1}(z_{k+\frac{1}{2}},z_k)\| = \frac{2L_p}{p!}\|z_{k+\frac{1}{2}}-z_k\|^p$ which proves the statement of the lemma.
\end{proof}

We now prove convergence for our Algorithm \ref{alg:mainalg} with a $p^{th}$-order weak-MVI condition where $p$ is closely tied to the order of the instance of Algorithm \ref{alg:mainalg} used.

\begin{theorem}\label{theorem:balanced}
For $F$ satisfying ~\eqref{assmpt:balanced} with $\rho\leq\frac{15}{16}(\frac{p!}{L_p})^{\frac{p+1}{p}}$ and running Algorithm~\ref{alg:mainalg} we have,
\begin{itemize}
    \item[(i)] for all $k\geq 1:$ 
    $$
        \frac{1}{K+1}\sum_{k=0}^K \|F(z_{k+\frac{1}{2}})\|^\frac{2}{p} \leq \frac{ \|z_0 - z^*\|^2}{c_2(K+1)}.
    $$
    In particular, we have that 
    \begin{align}
        \min_{0\leq k\leq K} \|F(z_{k+\frac{1}{2}})\|^\frac{2}{p} \leq \frac{\|z_0 - z^*\|^2}{c_2(K+1)}\rightarrow 
        \min_{0\leq k\leq K} \|F(z_{k+\frac{1}{2}})\|^2 \leq \frac{C}{(K+1)^p}  \nonumber
    \end{align}
    where $C=\frac{\|z_0-z^*\|^{2p}}{c_2^p}$ and 
    \begin{align}
        \ee_{k \leq \mathrm{Unif}\{0, \dots, K\}}\big[\|F(z_{k+\frac{1}{2}})\|^2\big]\leq (\frac{ \|z_0 - z^*\|^2}{c_2(K+1)})^p\leq \frac{C}{(K+1)^p},
    \end{align}
    
    where $k \leq \mathrm{Unif}\{0, \dots, K\}$ denotes an index $i$ chosen uniformly from $\{0, \dots, K\}.$

    \item[(ii)] for $K = O(\frac{1}{\epsilon^\frac{p}{2}})$ number of iterations the output $z_{out}$ of Algorithm \ref{alg:mainalg} is an $\epsilon$-approximate stationary point i.e., it satisfies $\|F(z_{out})\| \leq \epsilon$
\end{itemize}
\end{theorem}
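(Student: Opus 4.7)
The plan is to derive part (i) by combining Lemma \ref{lemma:supportive}, Lemma \ref{lemma:upper-b}, and the weak-MVI assumption~\eqref{assmpt:balanced}, converting a summation bound on squared step sizes into one on $\sum_k\|F(z_{k+\frac12})\|^{2/p}$; part (ii) will then follow by inverting this convergence rate.

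First I would set $z = z^*$ in Lemma \ref{lemma:supportive}, so that each inner product on the left becomes $\langle F(z_{k+\frac12}), z_{k+\frac12} - z^*\rangle$, which by~\eqref{assmpt:balanced} is at least $-\tfrac{\rho}{2}\|F(z_{k+\frac12})\|^{(p+1)/p}$. Substituting and rearranging yields
$$\tfrac{15}{16}\sum_{k=0}^K \|z_k - z_{k+\frac12}\|^2 \;\leq\; \|z_0 - z^*\|^2 \;+\; \tfrac{\rho\, p!}{2L_p}\sum_{k=0}^K \lambda_k\|F(z_{k+\frac12})\|^{(p+1)/p}.$$
Next, combining Lemma \ref{lemma:upper-b} with the closed form $\lambda_k = \tfrac12\|z_{k+\frac12} - z_k\|^{1-p}$ gives the direct estimate $\lambda_k\|F(z_{k+\frac12})\|^{(p+1)/p} \leq \tfrac12(3L_p/p!)^{(p+1)/p}\|z_{k+\frac12} - z_k\|^2$, so both sides of the display above are of the form $(\text{const})\cdot\sum_k \|z_k - z_{k+\frac12}\|^2$. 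Under the assumed upper bound on $\rho$, the net coefficient on the left remains a strictly positive constant $c_1 > 0$, giving $c_1\sum_k \|z_k - z_{k+\frac12}\|^2 \leq \|z_0 - z^*\|^2$.

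To convert this into the claimed bound on $\|F(z_{k+\frac12})\|^{2/p}$, I would rearrange Lemma \ref{lemma:upper-b} in the form $\|z_k - z_{k+\frac12}\|^2 \geq (p!/3L_p)^{2/p}\|F(z_{k+\frac12})\|^{2/p}$ and absorb the constant into a new $c_2 > 0$. Dividing by $K+1$ is the average bound in (i), and since the minimum is at most the average, the stated per-iterate minimum bound follows; raising that inequality to the $p$-th power (valid because $x \mapsto x^p$ is monotone on $[0,\infty)$ for $p \geq 1$) produces $\min_k \|F(z_{k+\frac12})\|^2 \leq C/(K+1)^p$ with $C = \|z_0-z^*\|^{2p}/c_2^p$. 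For (ii), inverting $C/(K+1)^p \leq \epsilon^2$ gives the iteration count $K = O(1/\epsilon^{2/p})$, and since $z_{out}$ is defined as the iterate realizing the minimum of $\|F(z_{k+\frac12})\|$, we conclude $\|F(z_{out})\| \leq \epsilon$.

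The main obstacle will be tracking the constants carefully enough to verify that the threshold condition on $\rho$ in the hypothesis is precisely what is required to keep the coefficient $c_1$ strictly positive in the second step; once this absorption is done, the rest is a clean composition of the two lemmas with the weak-MVI inequality and an averaging/inversion argument, with no further structural obstacle.
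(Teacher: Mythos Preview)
Your proposal is correct and follows essentially the same route as the paper: set $z=z^*$ in Lemma~\ref{lemma:supportive}, use the weak-MVI inequality~\eqref{assmpt:balanced} together with Lemma~\ref{lemma:upper-b} and the explicit form of $\lambda_k$ to absorb the correction term into the $\sum_k\|z_k-z_{k+\frac12}\|^2$ sum (yielding a positive coefficient $c_1$ under the hypothesis on $\rho$), and then invoke Lemma~\ref{lemma:upper-b} again to pass to $\|F(z_{k+\frac12})\|^{2/p}$ before averaging and inverting. Your derivation of $K=O(1/\epsilon^{2/p})$ in part~(ii) is also the right exponent, matching the rates in the abstract and Table~1.
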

\begin{proof}
Setting $z=z^*$ in Lemma 3.2 \citep{adil2022optimal}, for Algorithm \ref{alg:mainalg},  we have: 
\begin{align*}
    \sum_{i=0}^K \lambda_k \frac{p!}{L_p}\langle F(z_{k+\frac{1}{2}}),z_{k+\frac{1}{2}}-z^* \rangle \leq \|z_0 - z^*\|^2-\frac{15}{16}\sum_{k=0}^K\|z_k-z_{k+\frac{1}{2}}\|^2,\nonumber
\end{align*}
Adding $\sum_{k=1}^K\rho\lambda_k\|F(z_{k+\frac{1}{2}})\|^{\frac{p+1}{p}}$ 
to both sides of Eq. \eqref{eq:main}, and noting from Eq.~\eqref{lemma:upper-b} that,
\begin{align}
     \rho\lambda_k\|F(z_{k+\frac{1}{2}})\|^{\frac{p+1}{p}} = \rho\frac{\|F(z_{k+\frac{1}{2}})\|^{\frac{p+1}{p}}}{2\|z_k-z_{k+\frac{1}{2}}\|^{p-1}}\leq \rho (\frac{L_p}{p!})^{\frac{p+1}{p}}\|z_k-z_{k+\frac{1}{2}}\|^2\nonumber
\end{align}
we obtain
\begin{align}\label{main2}
    \sum_{i=0}^K \lambda_k (\frac{p!}{L_p}\langle F(z_{k+\frac{1}{2}}),z_{k+\frac{1}{2}}-z^* \rangle + \rho\|F(z_{k+\frac{1}{2}})\|^{\frac{p+1}{p}}) \\
    \leq \|z_0 - z^*\|^2-(\frac{15}{16}-\rho(\frac{L_p}{p!})^{\frac{p+1}{p}})\sum_{k=0}^K\|z_k-z_{k+\frac{1}{2}}\|^2\nonumber
\end{align}
Now we set $c_1=\frac{15}{16}-\rho(\frac{L_p}{p!})^{\frac{p+1}{p}}$. Upon choosing $\rho$ such that $c_1 > 0$ we note by assumption \eqref{assmpt:balanced} that the LHS in Eq.~\eqref{main3} is non negative.
Setting $ c_2 = (\frac{p!}{3L_p})^\frac{2}{p}$ we observe from Lemma~\ref{lemma:upper-b} that
$$ (\frac{p!}{3L_p})^\frac{2}{p} \|F(z_{k+\frac{1}{2}})\|^\frac{2}{p} \leq \|z_k-z_{k+\frac{1}{2}}\|^2$$
Combining the above we have for the $\frac{1}{2}$-step iterates of the Algorithm \ref{alg:mainalg},
\begin{align}\label{final1}
    \frac{c_2}{c_1}\sum_{k=0}^K\|F(z_{k+\frac{1}{2}})\|^{\frac{2}{p}}&\leq \sum_{k=0}^K\|z_k-z_{k+\frac{1}{2}}\|^2\leq \frac{\|z_0 - z^*\|^2}{c_1}\nonumber
\end{align}
This implies the statement of the theorem.
\end{proof}

We also prove convergence for the $p^{th}$-order instance of the Algorithm \ref{alg:mainalg} under a $q^{th}$ order weak-MVI condition which is de-coupled from the order $p$. This allows us for instance to obtain guarantees for higher order methods when run on the original weak-MVI condition with $p=1$ as studied in the experiments of Section \ref{sec:experiments}. The proof is provided in \ref{theorem:imbalanced}.

 As a corollary we show that, for the monotone setting, our algorithm recovers a rate of $O(\frac{1}{k^p})$ on $\|F(x_k)\|^2$, as also obtained by \cite{lin2022continuous}.

\begin{corollary}
The $\frac{1}{2}$ step iterates of Algorithm \ref{alg:mainalg}, $z_{k+\frac{1}{2}}$, achieve $O(1/k^p)$ rate for monotone $F$.
\end{corollary}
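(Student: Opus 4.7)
The plan is to obtain this corollary as a direct specialization of Theorem \ref{theorem:balanced}. The key observation is that monotonicity of $F$, combined with the unconstrained setting, implies the $p^{th}$-order weak-MVI assumption \eqref{assmpt:balanced} with $\rho = 0$, which trivially satisfies the condition $\rho \leq \frac{15}{16}(p!/L_p)^{(p+1)/p}$ required by the theorem (since the right-hand side is strictly positive).

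First, I would invoke the preliminaries: in the unconstrained Euclidean setting the paper explicitly notes that $\|F(z^*)\| = 0$ for all $z^* \in \cz^*$, and we assume $\cz^* \neq \emptyset$. Fixing such a $z^*$, Definition \ref{def:monotonicity} applied with $z_a = z$ and $z_b = z^*$ gives
\begin{equation*}
    \innp{F(z) - F(z^*), z - z^*} \geq 0,
\end{equation*}
and since $F(z^*) = \zeros$, this reduces to $\innp{F(z), z - z^*} \geq 0$ for every $z \in \rr^d$. This is precisely \eqref{assmpt:balanced} with $\rho = 0$.

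Next, I would verify the prerequisite on $\rho$: for any $p \geq 1$ and $L_p > 0$, the bound $0 \leq \frac{15}{16}(p!/L_p)^{(p+1)/p}$ is immediate, so $\rho = 0$ is admissible in Theorem \ref{theorem:balanced}. Applying part (i) of the theorem yields
\begin{equation*}
    \min_{0 \leq k \leq K} \|F(z_{k+\frac{1}{2}})\|^2 \leq \frac{\|z_0 - z^*\|^{2p}}{c_2^{\,p}\,(K+1)^p} = O\!\left(\frac{1}{k^p}\right),
\end{equation*}
which is the statement of the corollary.

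The only conceptual subtlety — and really the only thing worth flagging — is that the reduction from monotonicity to weak-MVI needs the unconstrained assumption to guarantee $F(z^*) = 0$; in a constrained setting one would only get the Stampacchia inner-product inequality, which does not immediately give the weak-MVI bound in the form used by Theorem \ref{theorem:balanced}. Once this is checked, there is no real obstacle, and the corollary is a one-line consequence. I would keep the write-up brief: a sentence identifying $\rho = 0$, a sentence invoking Theorem \ref{theorem:balanced}, and the resulting rate.
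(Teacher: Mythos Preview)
Your proposal is correct and essentially equivalent to the paper's argument: both hinge on the observation that monotonicity together with $F(z^*)=0$ in the unconstrained setting yields $\langle F(z), z-z^*\rangle \geq 0$, i.e.\ weak-MVI with $\rho=0$. The only cosmetic difference is that you invoke Theorem~\ref{theorem:balanced} as a black box with $\rho=0$, whereas the paper re-derives the bound directly from Lemma~\ref{lemma:supportive} and Lemma~\ref{lemma:upper-b}; your packaging is slightly cleaner but the mathematics is identical.
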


\begin{proof}
From Eq.\eqref{eq:main} and using monotonicity we have,
\begin{align*}
    0 \leq \sum_{i=0}^K \lambda_k \frac{p!}{L_p}\langle F(z_{k+\frac{1}{2}}),z_{k+\frac{1}{2}}-z^* \rangle\leq \|z^*-z_0\|^2-\frac{15}{16}\sum_{k=0}^K\|z_k-z_{k+\frac{1}{2}}\|^2,\nonumber
\end{align*}
This gives
\begin{align*}
    \frac{16}{15}(\frac{p!}{3L_p})^\frac{2}{p} \sum_{i=0}^K\|F(z_{k+\frac{1}{2}})\|^{\frac{2}{p}}\leq \sum_{i=0}^K\|z_k-z_{k+\frac{1}{2}}\|^2\leq \frac{16}{15} \|z_0 - z^*\|^2
\end{align*}

and thus we have,
\begin{equation}
    \begin{gathered}
    \min_{0\leq k\leq K} \|F(z_{k+\frac{1}{2}})\|^\frac{2}{p} \leq \frac{\|z_0 - z^*\|^2}{c_2(K+1)}\rightarrow 
    \min_{0\leq k\leq K} \|F(z_{k+\frac{1}{2}})\|^2 \leq \frac{C}{(K+1)^p} 
    \end{gathered}
\end{equation}
\end{proof}
Note that for the monotone case \cite{yoon2021accelerated} obtain faster rates for the $p=1$ case by using the anchoring technique used by \cite{diakonikolas2020halpern} to obtain the same rate for a subclass of monotone problems, in particular they obtain $\|F(x_K)\|^2 \leq O(\frac{1}{K^{2}})$ while our algorithm guarantees a rate of $O(\frac{1}{K})$. Thus $\min_{0\leq k\leq K}\|F(x_k)\|^2 \leq O(\frac{1}{K^p})$ is not tight and obtaining faster rates for higher-order methods in the monotone setting remains an open problem. 
\section{Experiments}\label{sec:experiments}

We now illustrate the empirical performance\footnote{The code for all the experiments can be found at \href{https://github.com/AbhijeetiitmVyas/Higher-Order-Methods-for-Weak-MVIs}{Link to code}} of our method on different examples.

\subsection{With the standard min-max operator}

While \cite{diakonikolas2021efficient} give an example of a \textit{weak}-MVI function in the simplex-constrained setting, our analysis does not assume the simplex setting and thus we provide experiments on a modified version of the example "Forsaken" introduced in \citet{pethick2022escaping} to obtain a \textit{weak}-MVI function in the Euclidean setting. Note that our \textit{weak}-MVI condition on $\rho$ for $p=1$ is slightly different from that of \cite{diakonikolas2021efficient}.

\begin{example}
\begin{equation}\label{example:mforsaken}\tag{Modified-Forsaken}
    \min_{|x|\leq 2} \max_{|y|\leq 2} f(x,y) = x(y-1.5)+h(x)-h(y)
\end{equation}
where $h(t) = \frac{t^2}{4}-\frac{t^4}{2}+\frac{t^6}{6}$.
\end{example}

The only real-valued stationary point of the above function is $z^* = (1.31147,1.47596)$ and we numerically verify that this is a weak-MVI solution to the problem. We illustrate the performance of the $1^{st}$ and $2^{nd}$ order instances of our algorithm on this example in Figure \eqref{figure:mforsaken}. This corresponds to using a $p=2$ and $p=1$ method on a $p=1$ order \textit{weak}-MVI condition

Note that while Theorem~\ref{theorem:balanced} shows convergence with operator norm decreasing at a rate of $O(\frac{1}{n^p})$ for the $p^{th}$-order instance of our algorithm on the $p^{th}$-order \textit{weak}-MVI condition, we show convergence for some $p^{th}$-order instance on a $q^{th}$ order \textit{weak}-MVI condition at a rate of $O(\frac{1}{n^p})$ for the operator norm in appendix section \ref{theorem:imbalanced}

\begin{figure}
\centering
\subcaptionbox{The iterates of the algorithm\label{figure:cgdxy}}
[.48 \textwidth]{\includegraphics[width=1\linewidth]{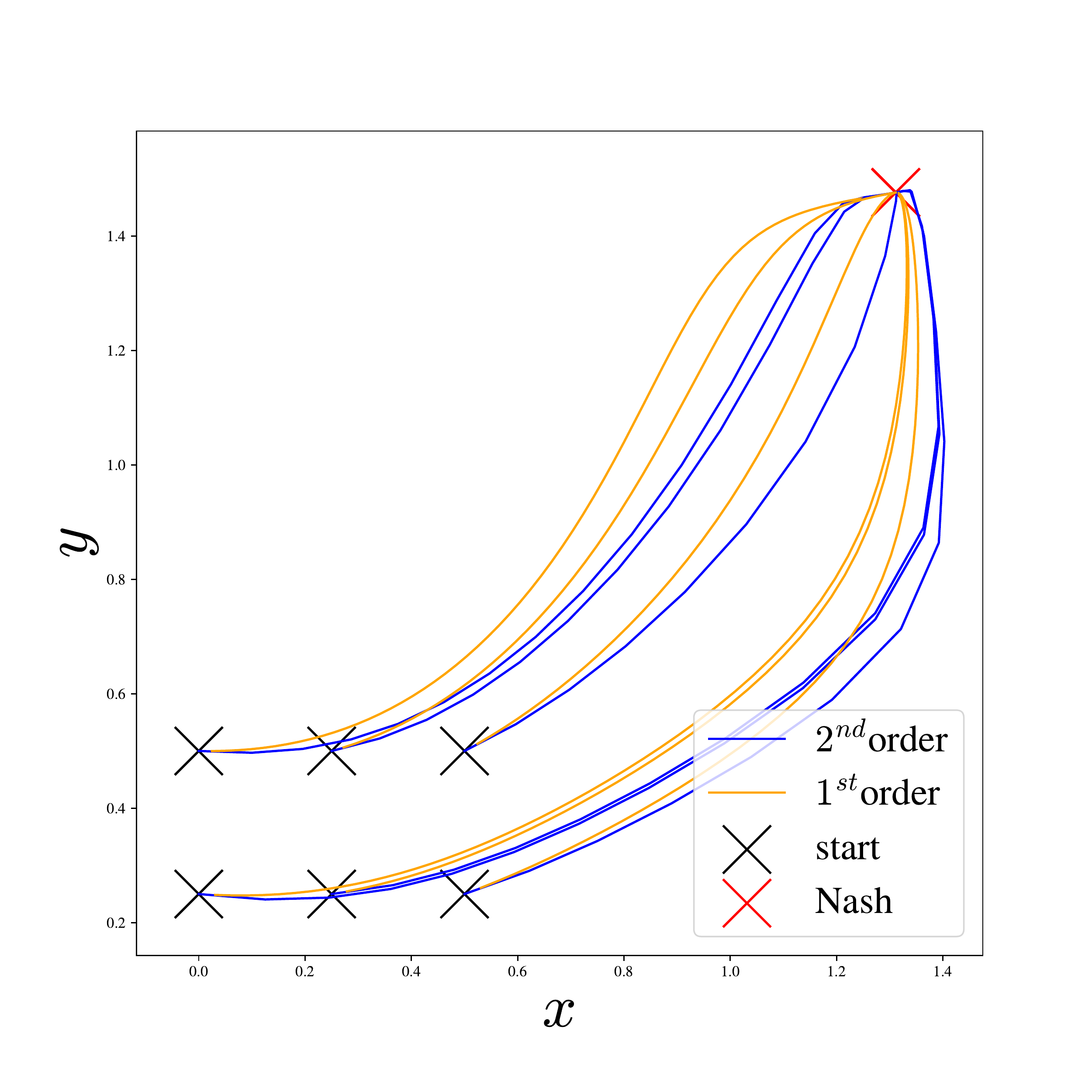}}
\subcaptionbox{The operator norm $\|F\|;$ \label{figure:cgoxy}}
[.48 \textwidth]{\includegraphics[width=1\linewidth]{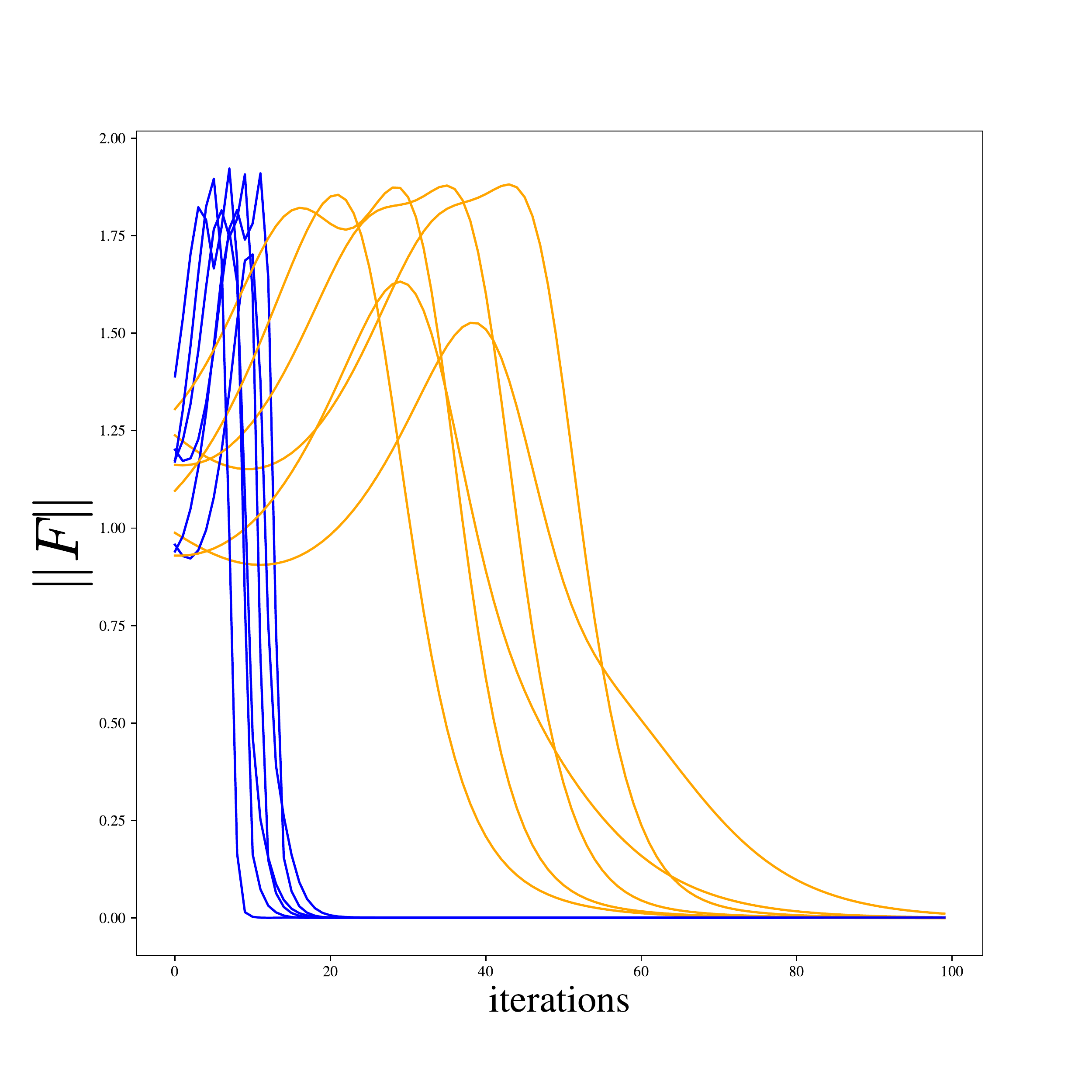}}
\caption{1st and 2nd Order method for the \textit{weak}-MVI example \eqref{example:mforsaken}. For the 1st order method we use $L_1=20$ and for the 2nd order method $L_2 = 50000$.
\label{figure:mforsaken}}
\end{figure}
\subsection{With the competitive operator $F_{\alpha}$}

In this section we discuss simple extensions to our method that can be used to solve a larger class of saddle point problems. 

While \citet{diakonikolas2021efficient} consider the \textit{weak}-MVI condition for $F = \begin{bmatrix}
\nabla_x f\\
-\nabla_y f 
\end{bmatrix}$ our results are for any general field $F$. Making use of this versatility we show that our method can be extended to include the parameterized competitive field $g_{\alpha}$ introduced in \citet{vyas2022competitive}. This field is important as following it allows us to obtain small operator norm for a different generalisation of the MVI condition, $\alpha$-MVI \citet{vyas2022competitive}. For the said field, $$F_{\alpha} =  \begin{bmatrix}
I & \alpha \nabla_{xy}f\\
-\alpha \nabla_{yx}f & I
\end{bmatrix}^{-1}\begin{bmatrix}
\nabla_x f\\
-\nabla_y f 
\end{bmatrix} $$ the \textit{weak}-MVI assumption~\eqref{assmpt:balanced} for $p=1$ contains the $\alpha$-MVI class. Note that if a field $F$ satisfies the $\alpha$-MVI condition, the corresponding field $F_{\alpha}$ satisfies the MVI condition.

We further note that the exact stationary points of $F_{\alpha}$ are the same as that of $F$. Thus an additional requirement of at least one SVI solution with $F_{\alpha}$ as the operator is satisfied if a solution to SVI with $F$ as the operator exists.

 Using $F_{\alpha}$ as the field in question, we illustrate the first-order (which coincides with \textsl{oCGO}) and second order version of our algorithm on some examples satisfying the $\alpha$-MVI condition and show that our method performs faster than \textsl{oCGO}. Note that the first-order method is somewhere between first and second order while the second-order method is between second and third-order order in terms of the order of information of the gradient oracles. The second-order method can thus be thought of as a higher order \textsl{oCGO}. 
\begin{example}
We first consider the problem  $$\min_{x\in\cx}\max_{y \in \cy} x^2y$$
 
Note that while all points on the $y$-axis are stationary points of the saddle point problem generated by $f(x,y)=x^2y$, there is only one global Nash equilibrium at the origin. However, when \hoeg uses $F$ as the operator the iterates converge to different points on $x=0$ as can be seen in Figure~\ref{figure:x^2ya}. Using $F_{\alpha}$ as the operator mitigates this issue of convergence to non Nash stationary points and as $\alpha$ increases, iterates from all the different initialization converge to the origin which is the Nash equilibrium, Figure~\ref{figure:x^2yb}. Theoretically mapping out the relation between the nature of stationary points and the operator used in our method, remains an open direction of research. 
\end{example}

\begin{figure}[H]
\centering
\subcaptionbox{$F$\label{figure:x^2ya}}
[.48 \textwidth]{\includegraphics[width=1\linewidth]{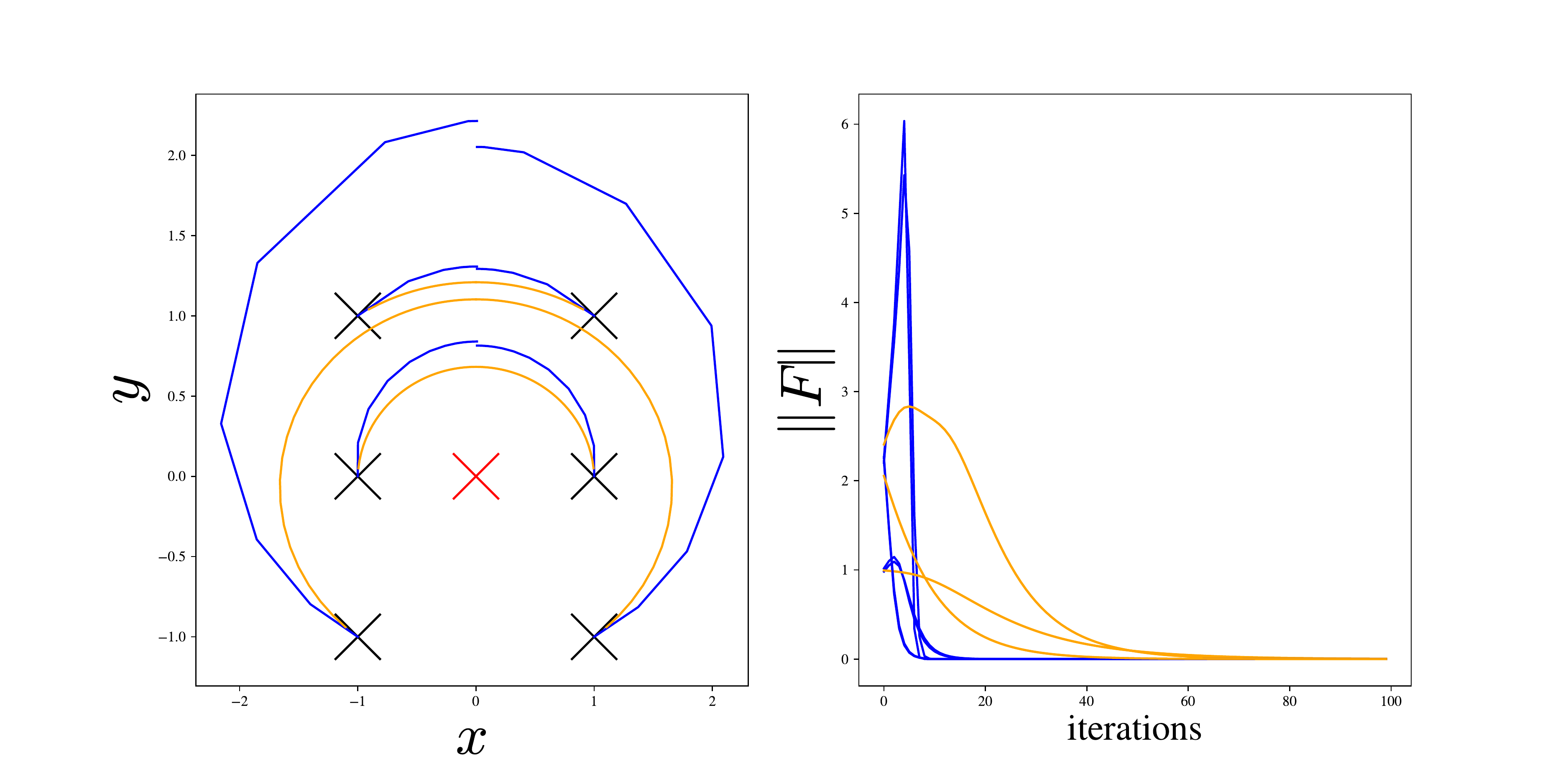}}
\subcaptionbox{$F_{\alpha};$ $\alpha=10$\label{figure:x^2yb}}
[.48 \textwidth]{\includegraphics[width=1\linewidth]{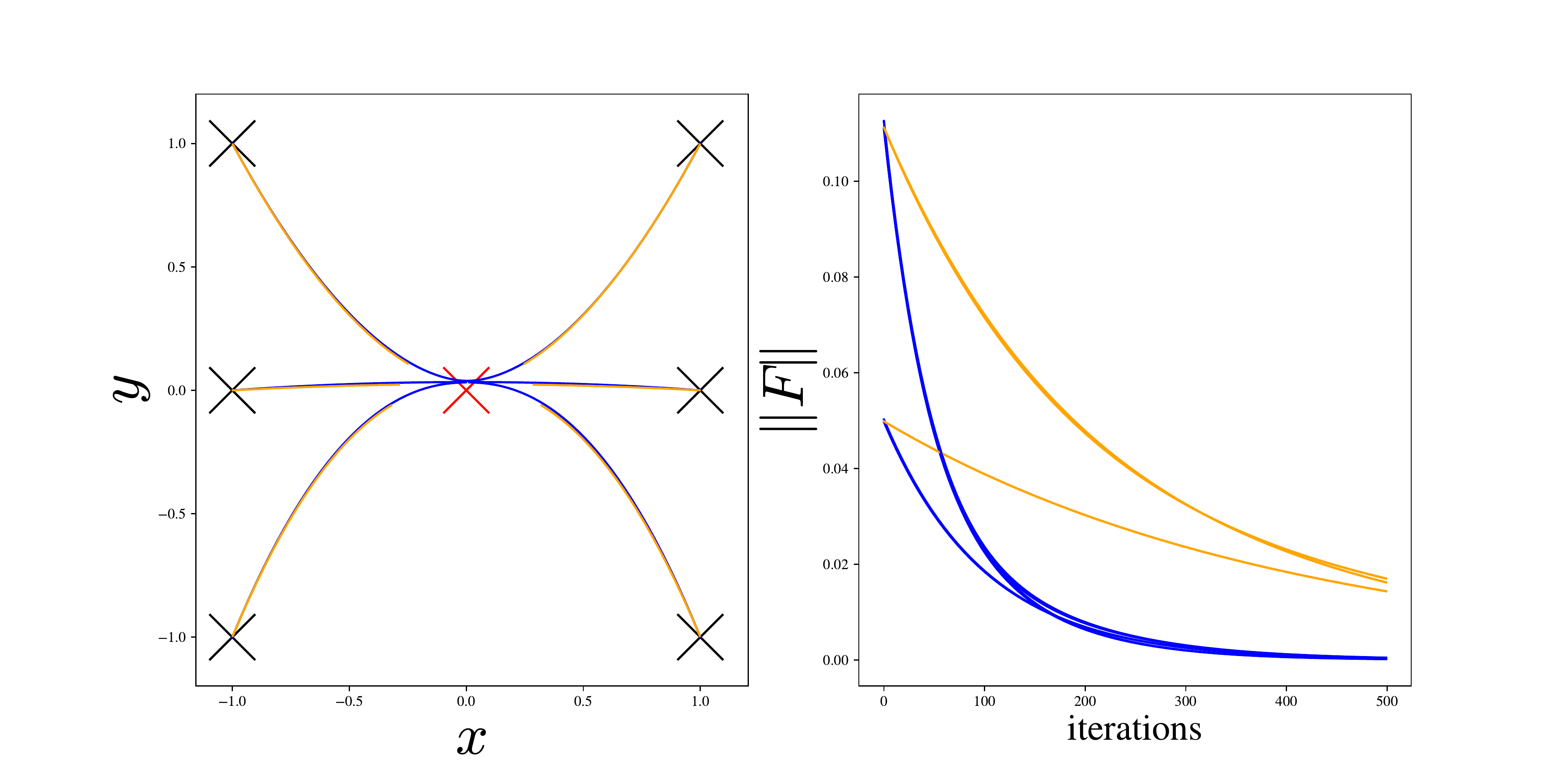}}
\caption{1st and 2nd Order method with $F_{\alpha}$ and $F$. For the 1st order method we use $L_1=20$ and for the 2nd order method $L_2 = 500$ as the Lipschitz constants for the operator}\label{figure:x^2y}
\end{figure}

\begin{example}
\begin{equation}\label{example:forsaken}\tag{Forsaken}
    \min_{|x|\leq \frac{3}{2}} \max_{|y|\leq \frac{3}{2}} f(x,y) = x(y-0.45)+h(x)-h(y)
\end{equation}
where $h(t) = \frac{t^2}{4}-\frac{t^4}{2}+\frac{t^6}{6}$.
\end{example}

We empirically study the Forsaken example in \citet{pethick2022escaping} and show that while using the gradient field $F=(\nabla_x f,-\nabla_y f)$ results in oscillation of both the 1st and 2nd order methods as can be seen in Figure \ref{figure:forsaken}, using $F_{\alpha}$ allows us to converge to the only stationary point of the function, $z^* = (0.0780,0.4119)$. This happens since even though the stationary point $z^*$ does not satisfy the $weak$-MVI condition for $F$, it satisfies (as we numerically verify) the \textit{weak}-MVI (and MVI) condition for $F_{\alpha}$ , $\alpha\geq 2$.

\begin{figure}[H]
\centering
\subcaptionbox{$F$\label{figure:forsakena}}
[.48 \textwidth]{\includegraphics[width=1\linewidth]{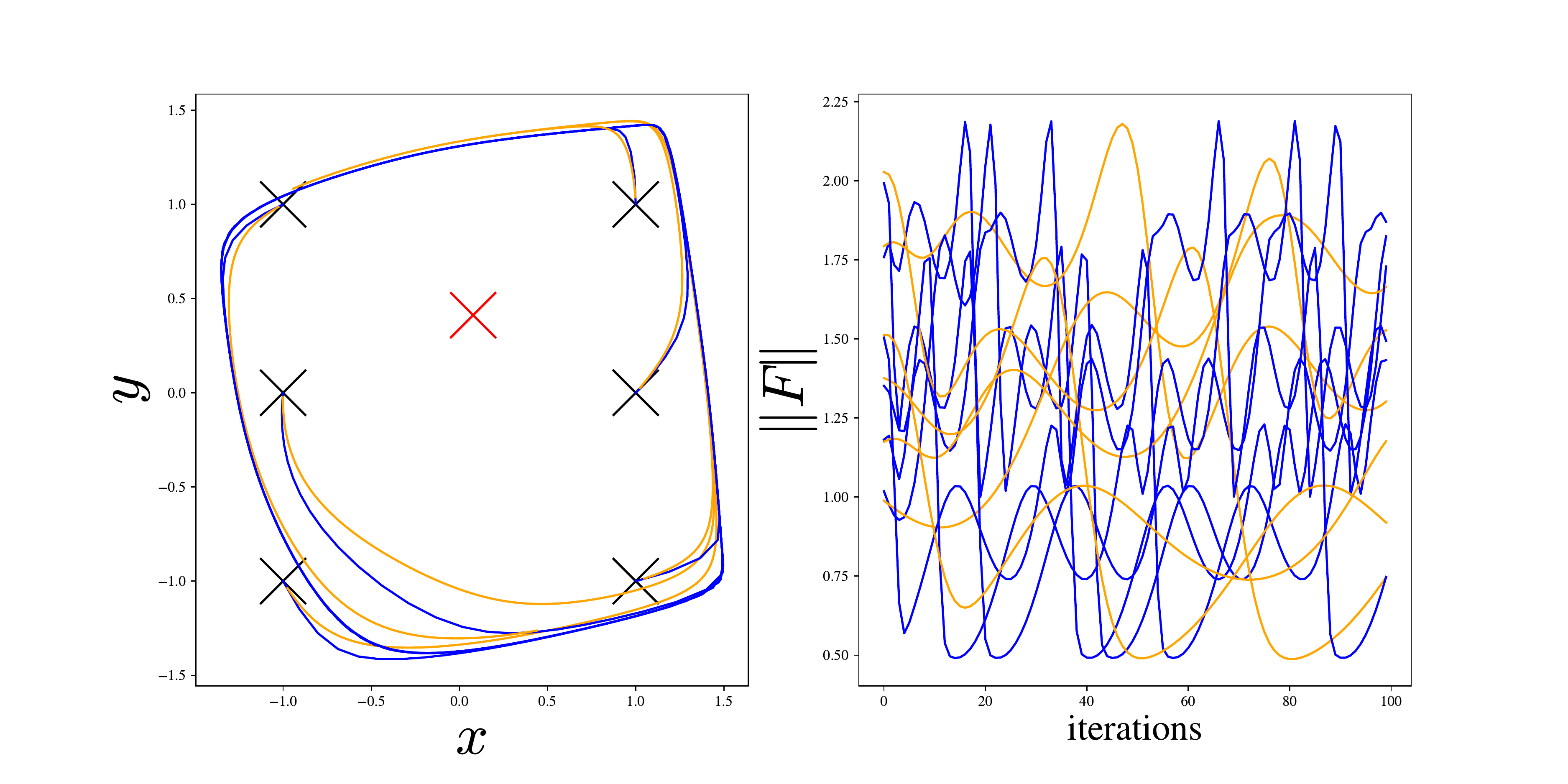}}
\subcaptionbox{$F_{\alpha};$ $\alpha=10$\label{figure:forsakenb}}
[.48 \textwidth]{\includegraphics[width=1\linewidth]{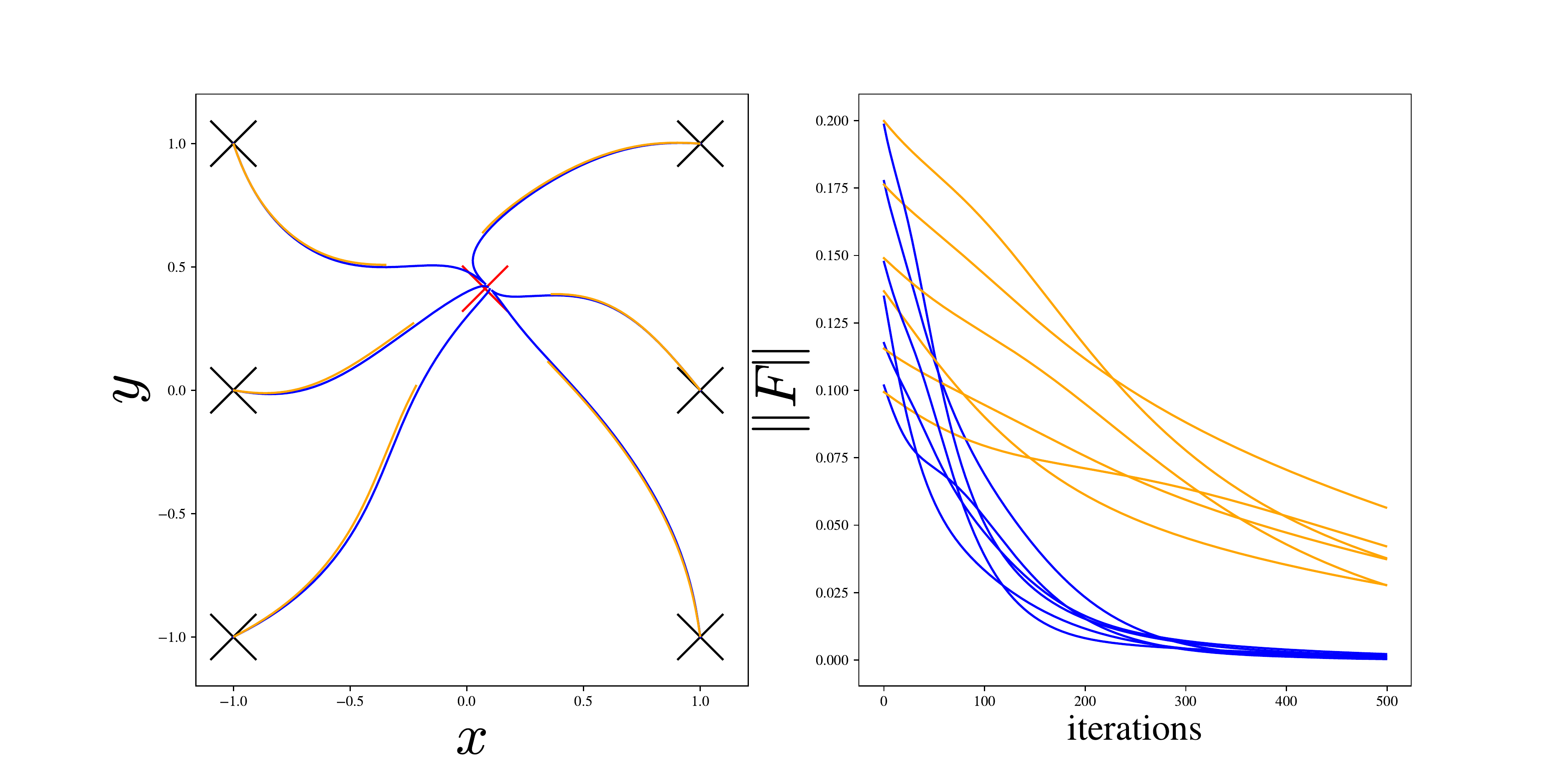}}
\caption{First and second-order method with $F$ and $F_{\alpha}$ on \eqref{example:forsaken}. While the algorithm using $F$ cycles, using $F_{\alpha}$ the algrithm converges to a stationary point.}\label{figure:forsaken}
\end{figure}
\section{Conclusion}
We propose higher-order methods for min-max problems satisfying a certain \textit{weak}-MVI condition \citep{diakonikolas2021efficient}, whereby the $p^{th}$-order instance of our algorithm finds approximate stationary points at a rate of $O(\frac{1}{\epsilon^\frac{p}{2}})$. We further establish that, in the continuous-time limit, \hoeg gives rise to the dynamical system \ref{sys:DE} \citep{lin2022continuous}, and the system obtains an analogous convergence rate as in the discrete-time setting. Finally we illustrate the performance of our algorithm experimentally for $p=1$ and $p=2$ using both the standard min-max operator $F=(\nabla_x f,-\nabla_y f)$ and the operator introduced by \cite{vyas2022competitive}, $F_{\alpha}$, demonstrating the potential advantage of our algorithm compared to first-order methods.

\bibliography{higherweak}
\bibliographystyle{abbrvnat}
\newpage
\appendix
\section{Supporting Lemmas}\label{subsec:supportive-dt}

We now present the supportive lemmas used to prove Lemma \ref{lemma:supportive}. 

Let $\omega(z_a,z_b)$ denote the Bregman divergence of $h$,
i.e.,
\begin{equation*}
\omega(z_a,z_b)   =  h(z_a) - h(z_b) - \langle \nabla h(z_b),z_a-z_b\rangle
\end{equation*}

\begin{lemma}[Three Point Property]\label{lem:3point}
Let $\omega(z,z_b)$ denote the Bregman divergence of a function $h$. The \textit{three point property} states, for any $z_a,z_b,z_c \in \cz$,
\[
\langle \nabla h(z_b) - \nabla h(z_c), z_a-z_c \rangle = \omega(z_a,z_c) + \omega(z_c,z_b) - \omega(z_a,z_b).
\]
\end{lemma}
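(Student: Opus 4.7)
The plan is to prove the three-point identity by direct computation, since the Bregman divergence is defined explicitly in terms of $h$, $\nabla h$, and an inner product, and each of the three quantities $\omega(z_a, z_c)$, $\omega(z_c, z_b)$, and $\omega(z_a, z_b)$ on the right-hand side has a simple closed form. The only real ``content'' is bookkeeping: tracking which function values cancel, and which gradient inner products combine.

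First, I would write out the definitions of the three Bregman terms appearing on the right-hand side:
\begin{align*}
\omega(z_a,z_c) &= h(z_a) - h(z_c) - \langle \nabla h(z_c), z_a - z_c \rangle, \\
\omega(z_c,z_b) &= h(z_c) - h(z_b) - \langle \nabla h(z_b), z_c - z_b \rangle, \\
\omega(z_a,z_b) &= h(z_a) - h(z_b) - \langle \nabla h(z_b), z_a - z_b \rangle.
\end{align*}
Then I would form $\omega(z_a,z_c) + \omega(z_c,z_b) - \omega(z_a,z_b)$ and observe that the six function values $\pm h(z_a), \pm h(z_b), \pm h(z_c)$ cancel pairwise, leaving only the three gradient inner product terms.

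The remaining step is to simplify the leftover inner products. Collecting the two terms involving $\nabla h(z_b)$, namely $-\langle \nabla h(z_b), z_c - z_b\rangle + \langle \nabla h(z_b), z_a - z_b\rangle$, by linearity of the inner product these combine to $\langle \nabla h(z_b), z_a - z_c\rangle$. The remaining term $-\langle \nabla h(z_c), z_a - z_c\rangle$ then pairs up to give exactly $\langle \nabla h(z_b) - \nabla h(z_c), z_a - z_c\rangle$, which is the left-hand side of the claimed identity.

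There is really no obstacle here; the statement is a textbook identity for Bregman divergences and the proof is a one-line expansion. The only minor care needed is to align the arguments consistently: the Bregman divergence $\omega(\cdot,\cdot)$ is defined with the gradient evaluated at the \emph{second} argument, so one must match the signs of the three expansions carefully before cancelling. Once the expansions are written out side by side, the identity follows by inspection.
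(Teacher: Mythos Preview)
Your proof is correct; this is precisely the standard direct expansion that establishes the three-point identity. The paper itself states this lemma without proof (treating it as a well-known fact about Bregman divergences), so your argument in fact supplies more detail than the paper does.
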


\begin{lemma}[\cite{tseng2008accelerated}]\label{lem:Tseng}
 Let $\phi$ be a convex function, let $z_a\in \cz$, and let
\[
z_c = \arg\min_{z'\in \cz}\{\phi(z') + \omega(z',z_a)\}.
\]
Then, for all $z_b\in \cz$, we have,
$ \phi(z_b) + \omega(z_b,z_a) \geq \phi(z_c) + \omega(z_c,z_a) +
\omega(z_b,z_c).$
\end{lemma}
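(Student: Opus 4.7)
The plan is to derive the inequality from two standard ingredients: the first-order optimality condition for $z_c$ and the convexity of $\phi$, joined together by the three-point identity (Lemma \ref{lem:3point}). First I would write out $\nabla_{z'}\{\phi(z') + \omega(z',z_a)\}\big|_{z'=z_c} = \nabla \phi(z_c) + \nabla h(z_c) - \nabla h(z_a)$, and invoke the variational inequality characterization of the minimizer $z_c$: for every $z_b \in \cz$,
\begin{equation*}
\langle \nabla \phi(z_c) + \nabla h(z_c) - \nabla h(z_a),\, z_b - z_c \rangle \geq 0.
\end{equation*}
Rearranging, this gives $\langle \nabla \phi(z_c), z_b - z_c \rangle \geq \langle \nabla h(z_a) - \nabla h(z_c), z_b - z_c \rangle$.

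Next I would use convexity of $\phi$ to upgrade the linear lower bound on the left into a function-value lower bound: $\phi(z_b) - \phi(z_c) \geq \langle \nabla \phi(z_c), z_b - z_c\rangle$. Chaining this with the previous display yields
\begin{equation*}
\phi(z_b) - \phi(z_c) \geq \langle \nabla h(z_a) - \nabla h(z_c),\, z_b - z_c\rangle.
\end{equation*}

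The final step is to rewrite the right-hand inner product via the three-point property of Lemma \ref{lem:3point}, applied with the roles $(z_a,z_b,z_c)\mapsto(z_b,z_a,z_c)$ in the statement of that lemma, giving
\begin{equation*}
\langle \nabla h(z_a) - \nabla h(z_c),\, z_b - z_c \rangle = \omega(z_b,z_c) + \omega(z_c,z_a) - \omega(z_b,z_a).
\end{equation*}
Substituting this identity into the previous inequality and rearranging the $\omega$-terms to the appropriate sides produces $\phi(z_b) + \omega(z_b,z_a) \geq \phi(z_c) + \omega(z_c,z_a) + \omega(z_b,z_c)$, which is the desired conclusion.

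There is essentially no main obstacle here; the only point requiring care is bookkeeping in the three-point identity, since the indices in Lemma \ref{lem:3point} must be matched correctly so that the signs of the three $\omega$ terms land where needed. If the minimization is over a proper convex subset $\cz$ rather than all of $\R^d$, the gradient-equal-zero condition must be replaced by the variational inequality form above, but the argument is otherwise identical, and the proof goes through without requiring any smoothness of $\phi$ beyond convexity (subdifferentials would suffice in that case).
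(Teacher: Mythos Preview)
Your proof is correct and is the standard argument for this classical lemma; the paper itself does not supply a proof but simply cites \cite{tseng2008accelerated}, so there is nothing to compare against. The only minor point worth noting is that when $\phi$ is merely convex (not differentiable), the optimality condition and the convexity inequality should be stated with a subgradient $g \in \partial \phi(z_c)$ in place of $\nabla\phi(z_c)$, as you already remark at the end.
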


Finally, for the sake of completion, we provide here the proof of the key lemma from \cite{adil2022optimal} (Lemma \ref{lemma:supportive}).

\subsection{Proof of Lemma \ref{lemma:supportive}}

\begin{proof}
For any $k$ and any $z \in \calZ$, we first apply Lemma~\ref{lem:Tseng} with $\phi(z) = \lambda_k \frac{p!}{L_p} \langle F(z_{k+\frac{1}{2}}),z - z_k\rangle$, which gives us
\begin{equation}\label{eq:Tsengsec3}
\lambda_k \frac{p!}{L_p} \langle F(z_{k+\frac{1}{2}}),z_{k+1} -z\rangle \leq \omega(z,z_k) - \omega(z,z_{k+1}) - \omega(z_{k+1},z_k). 
\end{equation}
Additionally, the guarantee of assumption~\ref{assmpt:smooth} with $z = z_{k+1}$ yields
\begin{equation}
\label{eqn:MPstep1sec3}
\left\langle \tau_{p-1}(z_{k+\frac{1}{2}}; z_k) , z_{k+\frac{1}{2}} - z_{k+1} \right\rangle  \leq \frac{2 L_p}{p!} \omega(z_{k+\frac{1}{2}}, z_k)^{\frac{p-1}{2}} \left\langle \nabla h(z_k) - \nabla h(z_{k+\frac{1}{2}}),  z_{k+\frac{1}{2}} - z_{k+1}  \right\rangle.
\end{equation}
Applying the Bregmann three point property (Lemma~\ref{lem:3point}) and the definition of $\lambda_k$ to Equation~\ref{eqn:MPstep1sec3}, we have
\begin{equation}
\label{eqn:Tseng2sec3}
\lambda_k \frac{p!}{L_p}  \left\langle \tau_{p-1}(z_{k+\frac{1}{2}}; z_k) , z_{k+\frac{1}{2}} - z_{k+1} \right\rangle  \leq \omega(z_{k+1}, z_k) - \omega(z_{k+1}, z_{k+\frac{1}{2}}) - \omega(z_{k+\frac{1}{2}}, z_k).
\end{equation}

Summing Eqs.~\eqref{eq:Tsengsec3} and \eqref{eqn:Tseng2sec3}, we obtain 
\begin{align}
  \label{eqn:one_kter_1sec3}
&\lambda_k \frac{p!}{L_p} \left( \langle F(z_{k+\frac{1}{2}}),z_{i+\frac{1}{2} } -z\rangle + \left\langle   \tau_{p-1}(z_{k+\frac{1}{2}}; z_k) - F(z_{i + \frac{1}{2} }) , z_{k+\frac{1}{2}} - z_{k+1} \right\rangle \right) \nonumber \\
&\leq \omega(z,z_k) - \omega(z,z_{k+1}) - \omega(z_{k+1}, z_{k+\frac{1}{2}}) - \omega(z_{k+\frac{1}{2}}, z_k).
\end{align}

Now, we obtain 
\begin{align*}
\lambda_k &\frac{p!}{L_p} \left\langle   \tau_{p-1}(z_{k+\frac{1}{2}}; z_k) - F(z_{i + \frac{1}{2} }) , z_{k+\frac{1}{2}} - z_{k+1} \right\rangle \\
&\substack{(i) \\ \geq} - \lambda_k \frac{p!}{L_p} \norm{\tau_{p-1}(z_{k+\frac{1}{2}}; z_k) - F(z_{i + \frac{1}{2} }) }_* \norm{ z_{k+\frac{1}{2}} - z_{k+1}} \\
&\substack{(ii) \\ \geq} - \lambda_k \norm{z_{i+ \frac{1}{2}} -  z_k}^p \norm{ z_{k+\frac{1}{2}} - z_{k+1}} \\ 
&\substack{(iii) \\ \geq} -\frac{1}{2} \omega(z_{i + \frac{1}{2}}, z_k)^{-\frac{p-1}{2}} \omega(z_{i + \frac{1}{2}}, z_k)^{\frac{p}{2}} \omega(z_{k+1}, z_{k+\frac{1}{2}})^{\frac{1}{2}} \\ 
&= - \frac{1}{2} \omega(z_{i + \frac{1}{2}}, z_k)^{\frac{1}{2}} \omega(z_{k+1}, z_{k+\frac{1}{2}})^{\frac{1}{2}} \\
&\substack{(iv) \\ \geq} - \frac{1}{16}  \omega(z_{i + \frac{1}{2}}, z_k) - \omega(z_{k+1}, z_{k+\frac{1}{2}})
\end{align*}

Here, $(i)$ used Holder's inequality, $(ii)$ used  assumption~\ref{assmpt:smooth}, $(iii)$ used the $1$-strong convexity of $\omega$, and $(iv)$ used the inequality $\sqrt{xy} \leq 2x + \frac{1}{8} y$ for $x,y \geq 0$. Combining with Eq.~\eqref{eqn:one_kter_1sec3} and rearranging yields
\begin{equation*}
\lambda_k \frac{p!}{L_p} \langle F(z_{k+\frac{1}{2}}),z_{i+\frac{1}{2} } -z\rangle \leq \omega(z,z_k) - \omega(z,z_{k+1}) - \frac{15}{16} \omega(z_{k+\frac{1}{2}}, z_k). 
\end{equation*}
We observe that by the choice of $\lambda_k$ in Algorithm \ref{alg:mainalg}, $\omega(z_{i + \frac{1}{2}}, z_k) = \left(2 \lambda_k\right)^{- \frac{2}{p-1}}$. Applying this fact and summing over all iterations $i$ yields 
\[
\sum_{i=0}^K  \lambda_k \frac{p!}{L_p} \langle F(z_{k+\frac{1}{2}}),z_{i+\frac{1}{2} } -z\rangle \leq \omega(z, z_0) - \frac{15}{16} \sum_{i=0}^K \left(2 \lambda_k\right)^{- \frac{2}{p-1}}
\]

Finally substituting $z=z^*$ and setting the potential function $h(x) = \|x\|^2$ in $\omega$ gives us the statement of the lemma.
\end{proof}

\section{Results for de-coupled weak-MVI condition and $F_{\alpha}$} 

In this section we discuss the rates obtained when higher order methods are applied to the function which satisfy the original weak-MVI condition

\begin{assumption}[$q^{th}$ Weak \textsc{mvi}]
There exists $z^* \in \cz^*$ such that:
\begin{equation}\label{assmpt:imbalanced}\tag{\textsc{a}$_3$}
    (\forall z \in \rr^d):\quad 
    \innp{F(z), z - z^*} \geq -\frac{\rho}{2} \|F(z)\|^q,
\end{equation}
for some parameter $\rho$.
\end{assumption}

\begin{theorem}\label{theorem:imbalanced}
For $F$ satisfying assumption~\eqref{assmpt:imbalanced} with 
 $\rho\leq\frac{15}{16D^q}\frac{p!D}{L_p}^{\frac{p+1}{p}}$ and $D = \max_k L_1\|z_k-z^*\|$, running a $p^{th}$-order instance of our Algorithm~\eqref{alg:mainalg} we have,

\begin{itemize}
    \item[(i)] for all $k\geq 1:$ 
    $$
        \frac{1}{k+1}\sum_{i=0}^k \|F(z_{k+\frac{1}{2}})\|^\frac{2}{p} \leq \frac{c \|z_0 - z^*\|^2}{k+1}.
    $$
    In particular, we have that 
    \begin{align*}
        \min_{0\leq i\leq k} \|F(z_{k+\frac{1}{2}})\|^\frac{2}{p} \leq \frac{c\|z_0 - z^*\|^2}{k+1}\rightarrow
        \min_{0\leq i\leq k} \|F(z_k)\|^2 \leq \frac{c_1}{(k+1)^p}
    \end{align*}

    and 
    $$
        \ee_{i \sim \mathrm{Unif}\{0, \dots, k\}}\big[\|F(z_k)\|^2\big] \leq \frac{c \|z_0 - z^*\|^2}{k+1}^p \leq \frac{C}{(k+1)^p},
    $$
    where $i \sim \mathrm{Unif}\{0, \dots, k\}$ denotes an index $i$ chosen uniformly at random from the set $\{0, \dots, k\}.$

    \item[(ii)] for $K = O(\frac{1}{\epsilon^\frac{p}{2}})$ number of iterations the output $z_{out}$ of Algorithm \ref{alg:mainalg} is an $\epsilon$-approximate stationary point i.e., it satisfies $\|F(z_{out})\| \leq \epsilon$
\end{itemize}
\end{theorem}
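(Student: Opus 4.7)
The plan is to mirror the proof of Theorem~\ref{theorem:balanced}, swapping the step that leveraged the matched exponent $\tfrac{p+1}{p}$ for one that handles the decoupled exponent $q$ at the cost of an extra factor of $D$. I would start from Lemma~\ref{lemma:supportive} with $z = z^*$ and add $\sum_{k=0}^{K} \rho \lambda_k \|F(z_{k+\frac{1}{2}})\|^q$ to both sides. By Assumption~\eqref{assmpt:imbalanced}, each summand on the left becomes non-negative (after accounting for the $\tfrac{p!}{L_p}$ factor on the inner-product term), so the whole task reduces to absorbing the new right-hand-side term into the existing $-\tfrac{15}{16}\sum_k \|z_k - z_{k+\frac{1}{2}}\|^2$ while leaving a positive residual coefficient.

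The key new ingredient is the a priori bound $\|F(z_{k+\frac{1}{2}})\| \leq D$, which follows from $L_1$-smoothness of $F$ and the fact that $F(z^*) = 0$: $\|F(z_{k+\frac{1}{2}})\| \leq L_1\|z_{k+\frac{1}{2}} - z^*\| \leq D$ by the definition $D = \max_k L_1\|z_k - z^*\|$. In the relevant regime $q \geq \tfrac{p+1}{p}$ (covering, e.g., a $p$th-order method run on the original $q = 2$ weak-MVI condition), I can peel off the excess factor as $\|F\|^q \leq D^{q - \tfrac{p+1}{p}}\|F\|^{\tfrac{p+1}{p}}$ and then apply Lemma~\ref{lemma:upper-b} together with the definition of $\lambda_k$ exactly as in Theorem~\ref{theorem:balanced} to get
\begin{equation*}
    \rho \lambda_k \|F(z_{k+\frac{1}{2}})\|^q \leq \rho D^{q - \tfrac{p+1}{p}} \left(\tfrac{L_p}{p!}\right)^{\tfrac{p+1}{p}} \|z_k - z_{k+\frac{1}{2}}\|^2.
\end{equation*}
The residual coefficient is then $c_1 := \tfrac{15}{16} - \rho D^{q - \tfrac{p+1}{p}}(L_p/p!)^{\tfrac{p+1}{p}}$, and the stated hypothesis $\rho \leq \tfrac{15}{16 D^q}(\tfrac{p!D}{L_p})^{\tfrac{p+1}{p}}$ is precisely equivalent to $c_1 \geq 0$.

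From this point the remainder is a verbatim replay of Theorem~\ref{theorem:balanced}: rearranging gives $\sum_k \|z_k - z_{k+\frac{1}{2}}\|^2 \leq \|z_0 - z^*\|^2/c_1$, Lemma~\ref{lemma:upper-b} converts this into a sum bound on $\|F(z_{k+\frac{1}{2}})\|^{2/p}$, and averaging, minimizing, or taking expectation over a uniform index yields the three statements of part (i), with $\min_k \|F(z_{k+\frac{1}{2}})\|^2 = O(1/K^p)$ delivering part (ii) after $K = O(1/\epsilon^{p/2})$ iterations. The main obstacle, and the most delicate point to pin down, is the circular dependence between the hypothesis on $\rho$ and the iterate-dependent quantity $D$: the statement treats $D$ post hoc, but to make the argument rigorous one needs an inductive boundedness claim for the iterates (which itself should follow from the descent-like inequality once $c_1 > 0$), so that $D$ is ultimately controlled by $\|z_0 - z^*\|$ alone.
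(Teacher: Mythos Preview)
Your proposal is correct and follows essentially the same approach as the paper: start from Lemma~\ref{lemma:supportive}, split $\|F\|^q = \|F\|^{q-\frac{p+1}{p}}\|F\|^{\frac{p+1}{p}}$, bound the first factor by $D^{q-\frac{p+1}{p}}$ via first-order smoothness and $F(z^*)=0$, then invoke Lemma~\ref{lemma:upper-b} and finish exactly as in Theorem~\ref{theorem:balanced}. Your side remarks about the implicit restriction $q \geq \tfrac{p+1}{p}$ and the post-hoc, iterate-dependent nature of $D$ are valid and are likewise left unaddressed in the paper's own proof.
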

\begin{proof}

From Lemma 3.2 \citep{adil2022optimal}, for Algorithm~\eqref{alg:mainalg},  we have: 

\begin{align}\label{eq:maind}
    \sum_{i=0}^K \lambda_k \frac{p!}{L_p}\langle F(z_{k+\frac{1}{2}}),z_{k+\frac{1}{2}}-z^* \rangle \leq\|z_0 - z^*\|^2-\frac{15}{16}\sum_{i=0}^K\|z_k-z_{k+\frac{1}{2}}\|^2
\end{align}

Adding $\sum_{k=1}^K\rho\lambda_k\|F(z_k)\|^q$ to both sides of Eq. \eqref{eq:maind}, and noting from Eq.~\eqref{lemma:upper-b} that,

\begin{align*}
\rho\lambda_k\|F(z_{k+\frac{1}{2}})\|^q = \|F(z_{k+\frac{1}{2}})\|^{(q-\frac{p+1}{p})} \rho\frac{\|F(z_{k+\frac{1}{2}})\|^{\frac{p+1}{p}}}{2\|z_k-z_{k+\frac{1}{2}}\|^{p-1}} \leq \rho (\frac{L_p}{p!})^{\frac{p+1}{p}}\|z_k-z_{k+\frac{1}{2}}\|^2 \|F(z_{k+\frac{1}{2}})\|^{(q-\frac{p+1}{p})}
\end{align*}

 Further substituting $p=1$ in and choosing $x,y$ appropriately from \eqref{assmpt:smooth} we obtain:
    
$$\|F(z_{k+\frac{1}{2}})\| \leq L_1\|z_{k+\frac{1}{2}}-z^*\|$$

further letting $D = \max_k L_1\|z_{k+\frac{1}{2}}-z^*\|$ we have,

\begin{align*}
\rho (\frac{L_p}{p!})^{\frac{p+1}{p}}\|z_k-z_{k+\frac{1}{2}}\|^2 \|F(z_{k+\frac{1}{2}})\|^{(q-\frac{p+1}{p})} \leq \rho D^q(\frac{L_p}{p!D})^{\frac{p+1}{p}}\|z_k-z_{k+\frac{1}{2}}\|^2\nonumber
\end{align*}.

Thus,
\begin{align}\label{main3}
    \sum_{i=0}^K \lambda_k (\frac{p!}{L_p}\langle F(z_{k+\frac{1}{2}}),z_{k+\frac{1}{2}}-z^* \rangle + \rho\|F(z_{k+\frac{1}{2}})\|^q)  \leq &\|z^*-z_0\|^2\\-&(\frac{15}{16}-\rho D^q(\frac{L_p}{p!D})^{\frac{p+1}{p}})\sum_{i=0}^K\|z_k-z_{k+\frac{1}{2}}\|^2\nonumber
\end{align}

From assumption~\eqref{assmpt:imbalanced} we have that LHS in Eq.~\eqref{main3} is non-negative. Setting $c_2 = (\frac{15}{16}-\rho D^q(\frac{L_p}{p!D})^{\frac{p+1}{p}})$ and using Lemma~\ref{lemma:upper-b} we get,

\begin{equation*}
   c_2\sum_{i=0}^K\|F(z_{k+\frac{1}{2}})\|^{\frac{2}{p}} \leq c_2\sum_{i=0}^K\|z_k-z_{k+\frac{1}{2}}\|^2\leq \|z^*-z_0\|^2
\end{equation*}

And thus we obtain results analogous to Theorem~\ref{theorem:balanced}. Furthermore, setting $q=2$ gives us rates for the $p^{th}$-order versions of \hoeg for the weak-MVI condition in \citep{diakonikolas2021efficient}
\end{proof}

We now present the result on the convergence of $F_{\alpha}$ when used as the operator in our algorithm. As a corollary of Theorem \ref{theorem:balanced} we have,

\begin{corollary}
Let the competitive field $F_{\alpha}$ satisfy the smoothness and weak-MVI assumptions \eqref{assmpt:balanced}, \eqref{assmpt:smooth} then for algorithm \eqref{alg:mainalg} using $F_{\alpha}$ we have,
\begin{itemize}
    \item[(i)] for all $k\geq 1:$ 
    $$
        \frac{1}{K+1}\sum_{k=0}^K \|F_{\alpha}(z_{k+\frac{1}{2}})\|^\frac{2}{p} \leq \frac{ \|z_0 - z^*\|^2}{c_2(K+1)}.
    $$
    In particular, we have that 
    \begin{align*}
        \min_{0\leq k\leq K} \|F_{\alpha}(z_{k+\frac{1}{2}})\|^\frac{2}{p} \leq \frac{\|z_0 - z^*\|^2}{c_2(K+1)}\rightarrow \min_{0\leq k\leq K} \|F_{\alpha}(z_{k+\frac{1}{2}})\|^2 \leq \frac{C}{(K+1)^p} 
    \end{align*}

    where $C=\frac{\|z_0-z^*\|^{2p}}{c_2^p}$ and 
    \begin{align*}
        \ee_{k \sim \mathrm{Unif}\{0, \dots, K\}}\big[\|F_{\alpha}(z_{k+\frac{1}{2}})\|^2\big] \leq (\frac{ \|z_0 - z^*\|^2}{c_2(K+1)})^p \leq \frac{C}{(K+1)^p},
    \end{align*}
    
    where $k \sim \mathrm{Unif}\{0, \dots, K\}$ denotes an index $i$ chosen uniformly at random from the set $\{0, \dots, K\}.$

    \item[(ii)] for $K = O(\frac{1}{\epsilon^\frac{p}{2}})$ number of iterations the output $z_{out}$ of Algorithm \ref{alg:mainalg} is an $\epsilon$-approximate stationary point i.e., it satisfies $\|F(z_{out})\| \leq \epsilon$
\end{itemize}
\end{corollary}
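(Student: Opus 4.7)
The plan is to invoke Theorem \ref{theorem:balanced} as a black box, with the operator $F$ replaced everywhere by $F_{\alpha}$. The first step is to observe that neither Algorithm \ref{alg:mainalg} nor the proof of Theorem \ref{theorem:balanced} uses any structural property of the operator beyond the two hypotheses \eqref{assmpt:balanced} and \eqref{assmpt:smooth}; the update rules only reference $F$ through $\tau_{p-1}$ and through the extragradient step $z_{k+1} = z_k - \tfrac{p!\lambda_k}{2L_p} F(z_{k+\frac{1}{2}})$, and the analysis manipulates $F$ purely abstractly. Consequently, feeding $F_{\alpha}$ into the algorithm and repeating the argument verbatim yields bounds phrased in terms of $\|F_{\alpha}(z_{k+\frac{1}{2}})\|$ rather than $\|F(z_{k+\frac{1}{2}})\|$.

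Next I would verify that the hypotheses of Theorem \ref{theorem:balanced} are indeed met by $F_{\alpha}$. By assumption the corollary grants that $F_{\alpha}$ satisfies both \eqref{assmpt:balanced} and \eqref{assmpt:smooth}, so no work is required on that front. The only additional sanity check is that the SVI solution set $\cz^*$ used inside Theorem \ref{theorem:balanced} is nonempty for $F_{\alpha}$; this is inherited from the original problem, since $F_{\alpha}$ is obtained from $F$ by premultiplication by the invertible matrix appearing in its definition, so the exact stationary points of the two operators coincide (a fact already noted in the paper's discussion of $F_{\alpha}$).

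With the hypotheses verified, I would then transcribe the three conclusions of Theorem \ref{theorem:balanced}. Part (i) gives the averaged bound $\tfrac{1}{K+1}\sum_{k=0}^K\|F_{\alpha}(z_{k+\frac{1}{2}})\|^{2/p} \leq \|z_0-z^*\|^2/(c_2(K+1))$, which immediately implies both the minimum-over-iterates bound and the bound in expectation over a uniformly random index, with the constant $C = \|z_0-z^*\|^{2p}/c_2^p$ exactly as in the original theorem. Part (ii) on iteration complexity $K = O(1/\epsilon^{p/2})$ follows by inverting the rate obtained in part (i), again mirroring the argument for $F$.

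The main (and essentially only) thing to keep track of is that the smoothness constant $L_p$, the weak-MVI parameter $\rho$, and the distance $\|z_0 - z^*\|$ appearing in the constants $c_2$ and $C$ now refer to the operator $F_{\alpha}$ rather than $F$, since these quantities differ between the two operators. There is no genuine obstacle here — the corollary is in essence an observation that Theorem \ref{theorem:balanced} was already proved for a generic operator and so transfers automatically to $F_{\alpha}$ once its smoothness and weak-MVI properties are posited.
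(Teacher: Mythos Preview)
Your proposal is correct and matches the paper's approach exactly: the paper presents this result as an immediate corollary of Theorem~\ref{theorem:balanced} with no separate proof, relying precisely on the observation you articulate---that the theorem was proved for a generic operator satisfying \eqref{assmpt:balanced} and \eqref{assmpt:smooth}, so it applies verbatim once $F$ is replaced by $F_{\alpha}$. Your additional remarks (that the stationary points of $F$ and $F_{\alpha}$ coincide, and that the constants $L_p$, $\rho$, $c_2$ now refer to $F_{\alpha}$) make explicit what the paper leaves implicit.
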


\end{document}